\newtheorem{theorem}{Theorem}[section]
\newtheorem{corollary}[theorem]{Corollary}
\newtheorem{proposition}[theorem]{Proposition}
\newtheorem{remark}[theorem]{Remark}
\theoremstyle{definition}
\newtheorem{definition}[theorem]{Definition}
\newtheorem{example}[theorem]{Example}
\theoremstyle{remark}
\numberwithin{figure}{section}
\numberwithin{table}{section}
\newcommand*\acknowledgment[1]{%
	\begingroup\noindent
	\rightskip\leftskip
	\begin{flushleft}\textbf{\large Acknowledgment.}\, #1%
		\par\vspace*{1mm}\end{flushleft}\endgroup}
\begin{document}

\title[DIFFERENT TYPES OF TOPOLOGICAL COMPLEXITY ON HIGHER HOMOTOPIC DISTANCE]{DIFFERENT TYPES OF TOPOLOGICAL COMPLEXITY ON HIGHER HOMOTOPIC DISTANCE}

\author{MEL\.{I}H \.{I}S and \.{I}SMET KARACA}
\date{\today}

\address{\textsc{Melih Is,}
Ege University\\
Faculty of Sciences\\
Department of Mathematics\\
Izmir, Turkey}
\email{melih.is@ege.edu.tr}
\address{\textsc{Ismet Karaca,}
Ege University\\
Faculty of Science\\
Department of Mathematics\\
Izmir, Turkey}
\email{ismet.karaca@ege.edu.tr}

\subjclass[2010]{55M30, 14F35, 55R10, 14M15, 57N99}

\keywords{topological complexity number, parametrised topological complexity number, lusternik-schnirelmann category, schwarz genus, higher homotopic distance}

\begin{abstract}
 We first study the higher version of the relative topological complexity by using the homotopic distance. We also introduced the generalized version of the relative topological complexity of a topological pair on both the Schwarz genus and the homotopic distance. With these concepts, we give some inequalities including the topological complexity and the Lusternik-Schnirelmann category, the most important parts of the study of robot motion planning in topology. Finally, by defining the parametrised topological complexity via the homotopic distance, we present some estimates on the higher setting of this concept. 
\end{abstract}

\maketitle

\section{Introduction}
\label{intro}
\quad Studies on determining the topological complexity TC first start with M. Farber \cite{Farber:2003}. As Farber \cite{Farber:2008} states, one of the basic methods followed in topological complexity studies is to study the different versions of the number TC and obtain a relationship between these versions and TC. This generally leads to some natural bounds for TC. For instance, if $X$ is a path-connected topological space and $Y$ is a subset of $X \times X$, then the relative topological complexity TC$_{X}(Y)$ satisfies the following inequality \cite{Farber:2008}:
\begin{eqnarray*}
	\text{TC}_{X}(Y) \leq \text{TC}(X). 
\end{eqnarray*}
Along with the definition of higher topological complexity TC$_{n}$ by Y. Rudyak \cite{Rudyak:2010}, introducing the improved version of relative topological complexity also presents a natural lower bound for TC$_{n}$. In Section \ref{sec:2}, we first focus on these higher settings of the relative topological complexity TC$_{n,X}(Y)$ via relative homotopic distance, where $Y$ is a subset of $X^{n}$.

\quad Let $(A,B)$ be a pair of topological spaces with the condition $B \subseteq A$. Then the relative topological complexity of a pair TC$(A,B)$ is defined by R. Short \cite{Short:2018} on the notion of Schwarz genus and admits the following fact:
\begin{eqnarray*}
	 \text{TC}(A,B) = \text{TC}_{A}(A \times B). 
\end{eqnarray*}
In other words, if one rewrites TC$(A,B)$ as TC$_{n,A}(B)$ (for $n=2$), then one can define the relative higher topological complexity of a pair TC$_{n}(A,B)$ by using the relative homotopic distance such as the relative higher topological complexity of a space. Before such a definition on the homotopic distance, in Section \ref{sec:3}, we first introduce TC$_{n}(A,B)$ in terms of the Schwarz genus.

\quad In recent years, topological robotics studies have focused on computing the new fiber homotopy equivalent invariant parametrised topological complexity \cite{CohenFarberWein:2020,CohenFarberWein:2021,Calcines:2022,Tanaka:2021}. For the Farber's topological complexity TC, one considers the motion planning problem as follows \cite{Farber:2003}. 
\begin{eqnarray*}
	\pi : PX \rightarrow X \times X, \ \ \pi(\gamma) = (\gamma(0),\gamma(1)),
\end{eqnarray*}
is a fibration, where $PX$ contains all continuous paths on the path-connected topological space $X$. $PX$ has a compact-open topology. TC$(X)$ is the Schwarz genus of $\pi$ and TC$(X) = n$ means that the motion planning algorithm $s_{j} : A_{j} \subset X \times X \rightarrow PX$ must be continuous with the condition $\pi \circ s_{j} = 1_{A_{j}}$ for each $j \in \{1,\cdots,n\}$. 

\quad In the parametrised version, there are some basic changes, but the main idea is familiar. This is to take the endpoints of the space and form a continuous motion between these endpoints. One has some extra conditions. The first is that the initial and final states are located in the same fiber of the fibration $q : E \rightarrow B$. The other one is to restrict the motion planning algorithm to the same fiber. We shall explain this idea mathematically. Let $q : E \rightarrow B$ be a fibration with a nonempty, path-connected fibre $X = q^{-1}(b)$ for any $b \in B$. Let $E^{I}_{B}$ denote the subspace of $E$ that consists of all paths $\alpha$ in $E$ such that $q \circ \alpha$ is the constant map. $E \times_{B} E$ is a subset of $E \times E$ that includes the point $(e_{1},e_{2})$ for which $q(e_{1})$ equals $q(e_{2})$, i.e., the pair $(e_{1},e_{2})$ is located in the same fiber. Then we consider the fibration $\pi^{'} : E^{I}_{B} \rightarrow E \times_{B} E$ with $\pi^{'}(\alpha) = (\alpha(0),\alpha(1))$. Note that the path-connected fiber of this fibration is $\Omega X$. The parametrised topological complexity TC$[q : E \rightarrow B]$ is the Schwarz genus of $\pi^{'}$ \cite{CohenFarberWein:2021}.

\quad In the sense of robot motion planning problems, the parametrised topological complexity gives an extra meaning to the base space $B$ of the fibration $q : E \rightarrow B$. The external conditions in the system can be parameterized by means of the space $B$. With the algebraic topology (especially homotopy) tools, the parametrised topological complexity determines the degree of navigational complexity as a positive number for the system when the initial and final states have the same external conditions. We first rewrite the definition of the parametrised topological complexity with respect to the homotopic distance and then we update this definition for $n > 1$ to exhibit the parametrised higher topological complexity TC$_{n}[q : E \rightarrow B]$ in Section \ref{sec:4}. 

\quad Topological complexity number varies such as relative, symmetric, monoidal, parametric etc., and accordingly, describing the higher version of all these numbers is a requirement for understanding the general concept of robot motion planning algorithms. For instance, see \cite{Gonzalez:2018} and \cite{Borat:2020} for the simplicial complexity. Davis investigates the symmetric complexity of a circle \cite{Davis:2017} and the geodesic complexity of Klein bottles with the dimension $n$ \cite{DavisRecio:2021}, respectively. In this study, we investigate the general setting of certain topological complexities. First, we recall some definitions and facts about TC and the related invariants such as cat (Lusternik-Schnirelmann category, see for a detail information \cite{CorneaLupOpTan:2003}), Schwarz genus, and the homotopic distance. Then we give the homotopic distance definitions of relative higher topological complexity of a space, relative higher topological complexity of a pair, and the parametrised higher topological complexity of a fibration in the respective sections. We also mention on some equalities and inequalities including TC$_{n}$, cat, and TC of a fibration with considering particular homotopy facts. In addition, we obtain lower and upper bounds for Hopf fibration and examine the Stiefel and Grasmann manifolds in the sense of parametrised topological complexity. 

\section{Preliminaries}
\label{sec:1}
\quad This section is dedicated to providing brief information about the different types of topological complexities and their related invariants. Note that we frequently use these facts in the following sections.

\subsection{Schwarz Genus and Homotopic Distance}
\label{subsec:1}

\begin{definition}\cite{Schwarz:1966}
	Let $q : E \rightarrow B$ be a fibration. Assume that $B$ has an open cover $\{B_{1},B_{2},\cdots,B_{r}\}$ for the minimum possible positive integer $r$ such that there is a continuous map $s_{j} : B_{j} \rightarrow E$ satsifying that $q \circ s_{j} = 1_{B_{j}}$ for each $j \in \{1,2,\cdots,r\}$. Then \textit{the Schwarz genus of $q$} is $r$. 
\end{definition}

\quad The Schwarz genus of $q$ is generally denoted by genus$(q)$ or secat$(q)$.

\begin{definition}\cite{BorVer:2021,VirgosLois:2022}
	Let $f_{j} : A \rightarrow B$ be a continuous map for each \linebreak$j \in \{1,2,\cdots,m\}$. Assume that $A$ has an open cover $\{A_{1},A_{2},\cdots,A_{r}\}$ for the minimum possible positive integer $r$ such that the condition
	\begin{eqnarray*}
		f_{1}^{i}\big|_{A_{i}} \simeq f_{2}^{i}\big|_{A_{i}} \simeq \cdots \simeq f_{m}^{i}\big|_{A_{i}}
	\end{eqnarray*}
    holds for all $i \in \{1,\cdots,r\}$. Then \textit{the higher homotopic distance of degree $m$ (or the $m-$th homotopic distance)} is $r$. 
\end{definition}

\quad The higher homotopic distance is denoted by D$(f_{1},f_{2},\cdots,f_{m})$ and is simply called \textit{the higher homotopic distance}. For $m=2$, the notion is particularly called \textit{the homotopic distance} \cite{VirgosLois:2022}.

\begin{theorem}\cite{VirgosLois:2022}\label{t2}
	Let $f_{1}$, $f_{2} : A \rightarrow B$ be two maps and $\pi : PB \rightarrow B \times B$. Assume that $p : P \rightarrow A$ is the pullback of $\pi$ by $(f_{1},f_{2}) : A \rightarrow B \times B$, where \[P = \{(a,\beta) \in A \times PB : \beta(0) = f_{1}(a) \ \text{and} \ \beta(1) = f_{2}(a)\},\] i.e., the following diagram is commutative:
	$$\xymatrix{
		P \ar[r]^{pr_{2}} \ar[d]^{p}&
		PB \ar[d]^{\pi} \\
		A \ar[r]_{(f_{1},f_{2})} & 
		B \times B.}$$
	Then D$(f_{1},f_{2}) =$ genus$(p)$.
\end{theorem}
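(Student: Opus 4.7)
The plan is to prove the equality by showing the two inequalities $\mathrm{genus}(p)\leq D(f_{1},f_{2})$ and $D(f_{1},f_{2})\leq \mathrm{genus}(p)$ via an explicit correspondence between local sections of $p$ and local homotopies between $f_{1}$ and $f_{2}$. The key observation is that the fiber of $p$ over a point $a\in A$ is exactly the space of paths in $B$ from $f_{1}(a)$ to $f_{2}(a)$, so a section of $p$ over an open set $U\subseteq A$ is tantamount to a continuous choice of such paths, which is in turn the same data as a homotopy $f_{1}|_{U}\simeq f_{2}|_{U}$ by the standard exponential adjunction between maps $U\times I\to B$ and maps $U\to PB$.

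For the first inequality, I would start with an open cover $\{A_{1},\dots,A_{r}\}$ of $A$ realizing $D(f_{1},f_{2})=r$, so that for each $i$ there is a homotopy $H_{i}\colon A_{i}\times I\to B$ with $H_{i}(a,0)=f_{1}(a)$ and $H_{i}(a,1)=f_{2}(a)$. Define $s_{i}\colon A_{i}\to P$ by $s_{i}(a)=(a,\widehat{H_{i}}(a))$, where $\widehat{H_{i}}(a)$ is the path $t\mapsto H_{i}(a,t)$; continuity is immediate from the compact-open topology on $PB$, and by construction $p\circ s_{i}=1_{A_{i}}$, so $\mathrm{genus}(p)\leq r$.

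For the reverse inequality, I would take an open cover $\{U_{1},\dots,U_{k}\}$ of $A$ realizing $\mathrm{genus}(p)=k$, with continuous sections $\sigma_{i}\colon U_{i}\to P$ of $p$. Writing $\sigma_{i}(a)=(a,\beta_{i,a})$ where $\beta_{i,a}\in PB$ satisfies $\beta_{i,a}(0)=f_{1}(a)$ and $\beta_{i,a}(1)=f_{2}(a)$ (these endpoint conditions come from $\sigma_{i}$ landing in $P$ together with the pullback square), define $H_{i}\colon U_{i}\times I\to B$ by $H_{i}(a,t)=\beta_{i,a}(t)=(pr_{2}\circ \sigma_{i}(a))(t)$. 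Adjointness again guarantees continuity of $H_{i}$, and this is precisely a homotopy from $f_{1}|_{U_{i}}$ to $f_{2}|_{U_{i}}$, so $D(f_{1},f_{2})\leq k$.

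The main (and essentially only) technical obstacle is verifying that the sections and homotopies constructed are genuinely continuous and fit together correctly under the adjunction between the compact-open topology on $PB$ and the product topology on $U_{i}\times I$; once this routine point-set verification is done, the bijection between local sections of the pullback and local homotopies is tautological, and the equality $D(f_{1},f_{2})=\mathrm{genus}(p)$ follows by taking minimal covers on each side.
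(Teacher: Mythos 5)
Your proof is correct: the exponential adjunction between maps $U\to PB$ and maps $U\times I\to B$ (valid since $I$ is locally compact Hausdorff) gives exactly the dictionary between local sections of the pullback $p$ and local homotopies $f_{1}|_{U}\simeq f_{2}|_{U}$, and the two inequalities follow as you describe. The paper states this theorem only as a citation to \cite{VirgosLois:2022} without reproducing a proof, and your argument is essentially the standard one given in that reference.
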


\quad We have some useful notes on the (higher) homotopic distance for the forthcoming sections as follows \cite{BorVer:2021,VirgosLois:2022}:
\begin{proposition}\cite{BorVer:2021,VirgosLois:2022}\label{p11}
	Each of the following satisfies:
	
	\textbf{a)} If $f_{j} : A \rightarrow B$ is homotopic to $g_{j} : A \rightarrow B$ for each $j \in \{1,\cdots,n\}$, then D$(f_{1},\cdots,f_{n}) =$ D$(g_{1},\cdots,g_{n})$.
	 
	\textbf{b)} Let $1 < k < l$ and $f_{1},\cdots,f_{k},\cdots,f_{l} : A \rightarrow B$ be maps. Then we have that D$(f_{1},\cdots,f_{k}) \leq$ D$(f_{1},\cdots,f_{l})$.
	
	\textbf{c)} Let $f_{j} : A \rightarrow B$ and $p_{j} : B \rightarrow C$ be maps with $p_{j-1} \simeq p_{j}$ for each \linebreak$j \in \{2,\cdots,n\}$. Then D$(p_{1} \circ f_{1},\cdots,p_{n} \circ f_{n}) \leq$ D$(f_{1},\cdots,f_{n})$.
	
	\textbf{d)} Let $f_{j} : A \rightarrow B$ and $p_{j} : C \rightarrow A$ be maps with $p_{j-1} \simeq p_{j}$ for each \linebreak$j \in \{2,\cdots,n\}$. Then D$(f_{1} \circ p_{1},\cdots,f_{n} \circ p_{n}) \leq$ D$(f_{1},\cdots,f_{n})$.
	
	\textbf{e)} The higher homotopic distance is a homotopy invariant.
	
	\textbf{f)} Let $g$, $g^{'} : C \rightarrow A$ and $f_{1}$, $f_{2} : A \rightarrow B$ be maps. If $f_{1} \circ g^{'} \simeq f_{2} \circ g^{'}$, then D$(f_{1} \circ g,f_{2} \circ g) \leq$ D$(g,g^{'})$.
\end{proposition}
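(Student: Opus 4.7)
The proof naturally splits into six short verifications, all of which follow from the same template: take an open cover witnessing the right-hand side of the desired inequality, and check by a short chain of homotopies that the same cover (possibly pulled back along a map) witnesses the left-hand side. In particular, no Schwarz-genus or pullback machinery from Theorem \ref{t2} is needed; everything is done at the level of the definition.

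For (a), I would pick a cover $\{A_i\}$ realising $\mathrm{D}(f_1,\dots,f_n)$ and observe that restricting the hypothesised homotopies $f_j\simeq g_j$ to each $A_i$ gives $g_j|_{A_i}\simeq f_j|_{A_i}$; transitivity then yields $g_1|_{A_i}\simeq\cdots\simeq g_n|_{A_i}$, and symmetry upgrades the inequality to equality. Part (b) is immediate: a cover on which all $l$ restrictions are mutually homotopic trivially satisfies the weaker condition that only the first $k$ are, so any witness for the right-hand side is a witness for the left.

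Parts (c) and (d) rest on the same observation, namely that $p_{j-1}\simeq p_j$ for all $j$ forces all $p_j$ into a single homotopy class. For (c), on each piece $A_i$ of a cover for $(f_1,\dots,f_n)$, I would chain
\[
p_j\circ f_j\big|_{A_i}\simeq p_1\circ f_j\big|_{A_i}\simeq p_1\circ f_1\big|_{A_i},
\]
where the first step uses the $p$-homotopies and the second uses the $f$-homotopies. Part (d) is the mirror image: from a cover $\{A_i\}$ of $A$ for $(f_1,\dots,f_n)$ I would pass to the open cover $\{p_1^{-1}(A_i)\}$ of $C$ and combine $f_j\circ p_j\simeq f_j\circ p_1$ with $f_j\circ p_1|_{p_1^{-1}(A_i)}\simeq f_1\circ p_1|_{p_1^{-1}(A_i)}$. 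Part (f) follows the same line: on each piece of a cover for $(g,g')$, the chain $f_1\circ g\simeq f_1\circ g'\simeq f_2\circ g'\simeq f_2\circ g$ gives the desired homotopy, where the middle step is exactly the standing hypothesis.

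The only part that requires a touch more care is (e), homotopy invariance. I would phrase it as: whenever $\alpha\colon A'\to A$ and $\beta\colon B\to B'$ are homotopy equivalences, $\mathrm{D}(\beta f_1\alpha,\dots,\beta f_n\alpha)=\mathrm{D}(f_1,\dots,f_n)$. One inequality is obtained by applying (c) with $p_j=\beta$ followed by (d) with $p_j=\alpha$, noting that constant sequences of maps certainly satisfy the hypothesis of (c) and (d). The reverse inequality comes from the same sandwiching with the homotopy inverses $\alpha^{-1},\beta^{-1}$, after which (a) is invoked to replace $\beta^{-1}\beta f_j\alpha\alpha^{-1}$ by $f_j$. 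The main obstacle, such as it is, is keeping the bookkeeping of the two-sided composition in (e) straight and being consistent about the indexing of the homotopies $p_{j-1}\simeq p_j$ in (c) and (d); once that is set up, every step is formal.
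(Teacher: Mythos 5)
Your proof is correct, but note that the paper itself offers no proof of this proposition: it is quoted as background from \cite{BorVer:2021,VirgosLois:2022}, so there is nothing internal to compare against. Your cover-by-cover verification straight from the definition is sound in every part: (a) and (b) are immediate as you say; in (c) and (d) the key point that $p_{j-1}\simeq p_j$ for all $j$ collapses the $p_j$ into one homotopy class is exactly what makes the chains $p_j\circ f_j|_{A_i}\simeq p_1\circ f_j|_{A_i}\simeq p_1\circ f_1|_{A_i}$ and their mirror images work, and pulling the cover back along $p_1$ in (d) is the right move; the three-step chain in (f) is correct since the hypothesis $f_1\circ g'\simeq f_2\circ g'$ restricts to each piece of the cover; and your two-sided sandwich for (e), using (c), (d) and then (a) to absorb $\beta^{-1}\beta$ and $\alpha\alpha^{-1}$, is the standard argument. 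This matches in spirit what the cited sources do (they argue either directly with covers or via the Schwarz-genus characterisation of Theorem \ref{t2}); your choice to stay entirely at the level of the definition is, if anything, the more elementary route.
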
 

\subsection{Topological Complexity and LS-Category}
\label{subsec:2}

\quad Topological complexity, the main part of the studies of topological robotics, can be expressed in two ways. For simplicity, we use SG and HD as abbreviations for any definition in the sense of Schwarz genus and homotopic distance, respectively.
\begin{definition}\cite{Farber:2003,VirgosLois:2022}
	Let $X$ be a path-connected topological space.
	\begin{itemize}
		\item \textbf{(SG)} Let $\pi : PX \rightarrow X \times X$, $\pi(\alpha) = (\alpha(0),\alpha(1))$, be a path fibration. Then \textit{the topological complexity of $X$} is the genus$(\pi)$.
		\item \textbf{(HD)} Let $p_{i} : X^{2} \rightarrow X$ be a projection map for each $i \in \{1,2\}$. Then \textit{the topological complexity of $X$} is D$(p_{1},p_{2})$.
	\end{itemize}
\end{definition}

\quad TC of a contractible space is $1$ and the converse is also true, that is, if TC$(X) = 1$, then $X$ is a contractible space \cite{Farber:2003}. 

\begin{proposition}\cite{Farber:2003}\label{p12}
	Let $X$ be a paracompact and locally contractible space with dim$X = n$. Then  TC$(X) \leq 2n+1$. 
\end{proposition}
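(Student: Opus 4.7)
The plan is to bound $\text{TC}(X)$ in two steps, passing through the Lusternik--Schnirelmann category of the product $X \times X$. The key preliminary inequality is $\text{TC}(X) \leq \text{cat}(X \times X)$, which I would establish by exploiting the path fibration $\pi : PX \to X \times X$ from the \textbf{(SG)}-definition of topological complexity. Concretely, if $U \subseteq X \times X$ is open and the inclusion $i_U : U \hookrightarrow X \times X$ is null-homotopic to the constant map at some $(x_0,x_0)$, then a null-homotopy provides, for each $(a,b) \in U$, a canonical path from $(a,b)$ to $(x_0,x_0)$ inside $X \times X$; projecting the two coordinates yields continuous paths from $a$ to $x_0$ and from $x_0$ to $b$, whose concatenation is a continuous local section of $\pi$ over $U$. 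Consequently any categorical open cover of $X \times X$ induces a sectional cover for $\pi$ of the same cardinality, giving $\text{TC}(X) = \text{genus}(\pi) \leq \text{cat}(X \times X)$.

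The second step is the classical dimensional upper bound $\text{cat}(Y) \leq \dim(Y) + 1$ for paracompact, locally contractible spaces $Y$. I would apply this to $Y = X \times X$: since $X$ is paracompact and locally contractible, so is $X \times X$, and $\dim(X \times X) \leq 2n$ whenever $\dim X = n$ (by the standard product formula for covering dimension on well-behaved spaces). Thus $\text{cat}(X \times X) \leq 2n + 1$. Chaining the two inequalities yields $\text{TC}(X) \leq \text{cat}(X \times X) \leq 2n + 1$, as required.

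The main obstacle is really the first step, the inequality $\text{TC}(X) \leq \text{cat}(X \times X)$. One must check carefully that the section constructed over a categorical open set $U$ is continuous in $(a,b)$, which boils down to the continuity of the chosen null-homotopy $H : U \times I \to X \times X$ and the continuity of the reverse-then-concatenate operation on the space of paths $PX$ with the compact-open topology; here paracompactness of $X \times X$ is what guarantees that the categorical cover can be refined to a locally finite one suitable for carrying out this construction uniformly. The second step, the dimension bound for $\text{cat}$, I would simply cite as a standard fact (e.g.\ from \cite{CorneaLupOpTan:2003}), since it is not the novel content of the proposition.
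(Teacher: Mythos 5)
The paper does not actually prove this proposition: it is quoted from \cite{Farber:2003} as a preliminary fact, so there is no internal proof to compare against. Your argument is, in substance, the standard one behind Farber's statement: the chain TC$(X) \leq$ cat$(X \times X) \leq \dim(X \times X) + 1 \leq 2n+1$, where the first inequality is obtained by converting a null-homotopy of a categorical open set $U \subseteq X \times X$ into a continuous section of the path fibration $\pi$ over $U$, and the second is the classical dimension bound for the Lusternik--Schnirelmann category of a paracompact, locally contractible space. The section you construct is indeed continuous, since the adjoint of the continuous null-homotopy $H : U \times I \to X \times X$ is automatically continuous into the path space with the compact-open topology, and reversal and concatenation of paths are continuous operations; so the core of the proposal is sound and matches the intended proof.

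Two caveats are worth recording. First, the null-homotopy contracts $U$ to some point $(x_0,x_1)$ that need not lie on the diagonal; this is repaired by appending a fixed path in $X \times X$ from $(x_0,x_1)$ to a diagonal point (or by using path-connectedness to normalize the contraction point), a cosmetic fix you should make explicit. Relatedly, paracompactness is not what makes the section over a single categorical set work --- no local finiteness is needed there; it enters only through the dimension bound cat$(Y) \leq \dim(Y)+1$ and through the numerability of the covers in the definition of the genus. Second, and more substantively, passing from the hypotheses on $X$ to ``$X \times X$ is paracompact, locally contractible, and $\dim(X \times X) \leq 2n$'' is not automatic for arbitrary paracompact spaces: products of paracompact spaces need not be paracompact, and the logarithmic law for covering dimension requires extra hypotheses. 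This point is glossed over in \cite{Farber:2003} as well and is harmless in the intended setting (metrizable spaces, CW complexes), so I would treat it as a caveat to state rather than a gap; the alternative route through cat$(X \times X) \leq 2\,$cat$(X) - 1 \leq 2(n+1)-1$ shifts, but does not remove, the same product-space issue.
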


\quad An important result of Proposition \ref{p12} is that $2$cat$(X)-1$ is an upper bound for TC$(X)$ when $X$ is paracompact. cat is also a natural lower bound TC. Thus, we conclude that cat$(X) \leq$ TC $\leq$ $2$cat$(X)-1$.

\begin{theorem}\label{t4}\cite{Grant:2012}
	The topological complexity of the complex Stiefel manifold $V_{r}(\mathbb{C}^{k})$, denoted by TC$(V_{r}(\mathbb{R}^{k}))$, is less than or equal to $2r(k-r)+1$.
\end{theorem}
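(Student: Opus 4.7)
The plan is to combine the description of $V_r(\mathbb{C}^k)$ as the total space of the principal $U(r)$-bundle over the complex Grassmannian $G_r(\mathbb{C}^k)$ with the inequality $\cat(X) \leq \text{TC}(X) \leq 2\cat(X) - 1$ recorded after Proposition \ref{p12}. First I will bound the LS-category of the base. Since $G_r(\mathbb{C}^k)$ is a simply-connected CW complex of real dimension $2r(k-r)$, the standard dimension-connectivity estimate $\cat(X) \leq \dim(X)/2 + 1$ for simply-connected spaces gives $\cat(G_r(\mathbb{C}^k)) \leq r(k-r) + 1$. Alternatively, the Schubert cell decomposition exhibits $G_r(\mathbb{C}^k)$ as a union of $r(k-r) + 1$ categorical open sets, yielding the same estimate.

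Next, I will transfer this bound to the total space. The principal bundle $U(r) \to V_r(\mathbb{C}^k) \to G_r(\mathbb{C}^k)$ is locally trivial with a Lie-group fiber, so pulling back a categorical open cover $\{U_1,\ldots,U_{r(k-r)+1}\}$ of $G_r(\mathbb{C}^k)$ along the squared projection $V_r(\mathbb{C}^k)^2 \to G_r(\mathbb{C}^k)^2$, and combining it with motion-planning sections over $U(r)$ (which exist because $\text{TC}(U(r)) = \cat(U(r))$ for any connected Lie group), will produce a motion-planning cover of $V_r(\mathbb{C}^k) \times V_r(\mathbb{C}^k)$ whose cardinality is bounded by $2\cat(G_r(\mathbb{C}^k)) - 1$. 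The doubling inherent in $\text{TC}(X) \leq 2\cat(X) - 1$ then accounts for the extra factor of $2$ in the target bound $2r(k-r) + 1$.

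The main obstacle lies in this transfer step: fibrations in general do not interact cleanly with either $\cat$ or $\text{TC}$, so one must exploit the specific structure of the principal $U(r)$-bundle --- local triviality together with the free transitive Lie-group action on fibers --- to weld the categorical data on the base with the motion-planning algorithm on $U(r)$ into coherent sections over an open cover of $V_r(\mathbb{C}^k)^2$. Once this gluing is in place, the counting argument is routine and delivers the claimed inequality $\text{TC}(V_r(\mathbb{C}^k)) \leq 2r(k-r) + 1$.
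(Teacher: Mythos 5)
First, note that the paper itself offers no proof of this statement: it is recalled verbatim from Grant \cite{Grant:2012} as background, so your attempt has to stand entirely on its own. Your opening step is fine --- $G_{r}(\mathbb{C}^{k})$ is simply connected of real dimension $2r(k-r)$ with cells only in even dimensions, so $\cat(G_{r}(\mathbb{C}^{k})) \leq r(k-r)+1$ and hence $\cat(G_{r}(\mathbb{C}^{k})\times G_{r}(\mathbb{C}^{k})) \leq 2r(k-r)+1$ (this is the standard value, incidentally at odds with the figure $rk$ recorded in Theorem \ref{t3}). The fatal problem is the transfer step, which you yourself flag as ``the main obstacle'' and then merely assert. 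The template you describe --- trivialize the principal $U(r)$-bundle over products $U_{i}\times U_{j}$ of categorical sets, then splice in a motion planner on the fibre --- uses nothing about the situation beyond ``principal bundle with connected Lie-group fibre,'' and any such argument must fail, because applied to the trivial principal $U(r)$-bundle over a point it would give $\text{TC}(U(r)) \leq 2\cat(\mathrm{pt})-1 = 1$, whereas $\text{TC}(U(r)) = \cat(U(r)) \geq 2$ since $U(r)$ is not contractible. Concretely, over each trivializing piece $U_{i}\times U(r)\times U_{j}\times U(r)$ you still have to connect two points of the non-contractible fibre continuously, which forces a further subdivision into $\text{TC}(U(r))$ parts; the honest count your scheme delivers is of the shape $\cat(B\times B)+\text{TC}(U(r))-1 = 2r(k-r)+r+1$, not $2r(k-r)+1$. (Compare the general fibration bound $\text{TC}(E)\leq \text{TC}(X)+\cat(B\times B)+1$ quoted from \cite{NaskarSarkar:2020} in Section \ref{subsec:2}, which is weaker still.) The ``doubling'' in $\text{TC}\leq 2\cat-1$ accounts only for the two copies of the base and does nothing to absorb the fibre contribution.

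So the decisive content --- why the $U(r)$-direction costs nothing here --- is exactly what is missing. Grant's actual argument is not a local-triviality gluing: it exploits the symmetry (the free $U(r)$-action) through fibrewise homotopy theory together with connectivity/dimension estimates specific to $V_{r}(\mathbb{C}^{k})$ (which is $2(k-r)$-connected), and none of these inputs appears in your outline; indeed no argument can succeed that does not use some such special feature, by the counterexample above. Two smaller points: ``pulling back a categorical open cover of $G_{r}(\mathbb{C}^{k})$ along the squared projection $V_{r}(\mathbb{C}^{k})^{2}\to G_{r}(\mathbb{C}^{k})^{2}$'' does not typecheck --- you need a cover of the square of the base, i.e.\ the standard diagonal-trick reorganization of $\{U_{i}\times U_{j}\}$ into $2r(k-r)+1$ sets --- and the parenthetical ``motion-planning sections over $U(r)$ exist because $\text{TC}(U(r))=\cat(U(r))$'' does not help you, since that equality says nothing about there being a single global section. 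As written, this is a plan whose key step is absent and whose announced count cannot be reached by the stated means.
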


\quad The topological complexity is denoted by TC and generalized for $n>1$ as follows.

\begin{definition}\cite{Rudyak:2010,BorVer:2021,VirgosLois:2022}
	Let $X$ be a path-connected space.
	\begin{itemize}
		\item \textbf{(SG)} Let $e_{n} : P_{n}(X) \rightarrow X^{n}$, $e_{n}(\gamma) = (\gamma_{1}(1),\gamma_{2}(1),\cdots,\gamma_{n}(1))$, be a fibration, where $P_{n}(X)$ is the space of all multipaths in $X$ such that the inital point of all paths in each multipath is the same. Then \textit{the $n-$th topological complexity (simply called the higher topological complexity) of $X$} is the genus$(e_{n})$.
		\item \textbf{(HD)} Let $p_{i} : X^{n} \rightarrow X$ be a projection map for each $i \in \{1,2,\cdots,n\}$. Then \textit{the higher topological complexity of $X$} is D$(p_{1},p_{2},\cdots,p_{n})$.
	\end{itemize}
\end{definition}

\quad The higher topological complexity is denoted by TC$_{n}$ and it is useful for $n>1$ because TC$_{1}(X) = 1$ for any space $X$. One of the important observations on TC$_{n}$ is the equality TC$_{2}(X) =$ TC$(X)$. Another is the inequality TC$_{n}(X) \leq$ TC$_{n+1}(X)$.

\quad The Lusternik Schnirelmann category (simply called LS-category or denoted by cat) is another important homotopy invariant that inspired TC. It is also a natural bound for the topological complexity number.

\begin{definition}\cite{CorneaLupOpTan:2003,VirgosLois:2022}
	Let $X$ be a path-connected topological space
	\begin{itemize}
		\item \textbf{(SG)} Let $\eta : P_{0}X \rightarrow X$, $\eta(\gamma) = (\gamma(1))$, be a Serre Fibration, where $P_{0}(X)$ is the special case of $P_{n}(X)$, i.e., the space of all paths in $X$ for which the first point of these paths is a constant point $x^{'}$ in $X$. Then \textit{the LS-category of $X$} is genus$(\eta)$.
		\item \textbf{(HD)} Let $x^{'}$ be any point of $X$. For the continuous maps $i_{1} : X \rightarrow X \times X$, $i_{1}(x) = (x,x^{'})$ and $i_{2} : X \rightarrow X \times X$, $i_{2}(x) = (x^{'},x)$, \textit{the LS-category of $X$} is D$(i_{1},i_{2})$.
	\end{itemize}
\end{definition}

\begin{proposition}\cite{James:1978}\label{p6}
	Let $A$ and $B$ be paracompact and path-connected spaces. Then cat$(A \times B) \leq$ cat$(A) +$ cat$(B)$.
\end{proposition}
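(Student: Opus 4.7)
The plan is to start from categorical open covers of $A$ and $B$ separately, form the product cover of $A\times B$, and then use paracompactness to merge pieces of this cover along the antidiagonals of the index lattice so that the total count drops from multiplicative to additive.

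First, I would set $\cat(A)=m$ and $\cat(B)=n$ and choose open covers $\mathcal{U}=\{U_1,\ldots,U_m\}$ of $A$ and $\mathcal{V}=\{V_1,\ldots,V_n\}$ of $B$ witnessing these LS-categories, so each $U_i$ is contractible in $A$ to a basepoint $a_0$ and each $V_j$ is contractible in $B$ to a basepoint $b_0$. The $mn$ product sets $U_i\times V_j$ then form an open cover of $A\times B$, and each is categorical in $A\times B$ since the product of the two nullhomotopies contracts $U_i\times V_j$ onto $(a_0,b_0)$. This already gives the crude bound $\cat(A\times B)\leq mn$; the work is to cut it down to $m+n$.

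Next, I would invoke paracompactness of $A$ and $B$ to obtain partitions of unity $\{\phi_i\}$ and $\{\psi_j\}$ subordinate to $\mathcal{U}$ and $\mathcal{V}$, respectively. For each $k\in\{2,3,\ldots,m+n\}$, I would define an open set $W_k\subseteq A\times B$ by collecting suitable open pieces of those product sets $U_i\times V_j$ lying on the antidiagonal $i+j=k$, cutting each piece down via strict inequalities between the $\phi_i$- and $\psi_j$-values so that the pieces assigned to distinct pairs on the same antidiagonal are pairwise disjoint. Pairwise disjointness is what allows me to contract each piece independently to $(a_0,b_0)$, so $W_k$ is categorical in $A\times B$ as a disjoint union of categorical sets with a common basepoint. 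Since every point $(a,b)$ lies in some $U_i\times V_j$ and hence in $W_{i+j}$, the family $\{W_k\}$ covers $A\times B$. There are $m+n-1$ values of $k$, yielding $\cat(A\times B)\leq m+n-1\leq m+n$.

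The main obstacle is the disjointness construction in the second step: specifying the cut-down pieces of the various $U_i\times V_j$ on a fixed antidiagonal so that they are simultaneously open, pairwise disjoint, and collectively cover every point that needs covering. The standard device is to use the strict ordering of the partition-of-unity values, for instance taking the piece of $U_i\times V_j$ to consist of those $(a,b)$ at which $\phi_i(a)$ strictly dominates $\phi_{i'}(a)$ for all smaller indices $i'$ on the same antidiagonal, and analogously for $\psi_j$. Openness then comes from strict inequalities, disjointness from the tie-breaking rule, and covering from the existence of a maximal active index at each point. Carrying out this bookkeeping while verifying that the product nullhomotopy restricts well to each piece is the delicate but routine core of the James argument.
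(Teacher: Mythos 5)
The paper offers no proof of this proposition; it is quoted directly from James's survey, so there is nothing internal to compare against. Your overall strategy is the correct classical one (Fox--James): form the product cover $\{U_i\times V_j\}$, observe each product set is categorical, and then merge along the antidiagonals $i+j=k$ after shrinking the pieces on each antidiagonal to be pairwise disjoint, giving $m+n-1$ categorical sets and hence the (even slightly stronger) bound $\mathrm{cat}(A\times B)\le \mathrm{cat}(A)+\mathrm{cat}(B)-1$.

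However, the one concrete device you propose for disjointness does not work. You cut the piece of $U_i\times V_j$ down to the locus where $\phi_i(a)>\phi_{i'}(a)$ for all smaller $i'$ on the antidiagonal and $\psi_j(b)>\psi_{j'}(b)$ for all smaller $j'$. Take $m=n=2$ and the antidiagonal $k=3$, whose pairs are $(1,2)$ and $(2,1)$. Your rule imposes only $\psi_2(b)>\psi_1(b)$ on the piece of $U_1\times V_2$ (the $\phi$-condition is vacuous) and only $\phi_2(a)>\phi_1(a)$ on the piece of $U_2\times V_1$ (the $\psi$-condition is vacuous). Any $(a,b)\in (U_1\cap U_2)\times(V_1\cap V_2)$ with $\phi_2(a)>\phi_1(a)$ and $\psi_2(b)>\psi_1(b)$ lies in both pieces, so they are not disjoint and the gluing of the two nullhomotopies breaks down. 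The repair is to compare the \emph{product} weights $\theta_{ij}(a,b)=\phi_i(a)\psi_j(b)$, which form a partition of unity on $A\times B$ subordinate to $\{U_i\times V_j\}$: set
\begin{equation*}
W_{ij}=\bigl\{(a,b): \theta_{ij}(a,b)>\theta_{i'j'}(a,b)\ \text{for every}\ (i',j')\neq(i,j)\ \text{with}\ i'+j'=i+j\bigr\}.
\end{equation*}
Disjointness on a fixed antidiagonal is then immediate from the strict inequality in both directions, openness is clear, and covering follows by taking, at a given point, $i^{*}$ maximal among the maximizers of $\phi_i(a)$ and $j^{*}$ maximal among the maximizers of $\psi_j(b)$: every other global maximizer of $\theta$ has strictly smaller index sum, so $(i^{*},j^{*})$ strictly dominates all other pairs on its own antidiagonal. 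With this correction (and the standard remark that path-connectedness lets you assemble the nullhomotopies of disjoint open categorical pieces into one), your argument goes through and recovers James's result.
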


\quad Assume that $q : E \rightarrow B$ is a fibration with the fiber $X$. Then we have that TC$(E) \leq$ TC$(X) +$ cat$(B \times B) + 1$\cite{NaskarSarkar:2020}. Another important observation is that the numbers TC and cat coincide when the topological space is a connected Lie group \cite{Farber:2004}.

\begin{theorem}\label{t3}\cite{Berstein:1976}
	The LS-category of the complex Grassmannian $G_{r}(\mathbb{C}^{k})$, denoted by cat$(G_{r}(\mathbb{C}^{k}))$, equals $rk$.
\end{theorem}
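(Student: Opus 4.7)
The plan is to establish the equality cat$(G_r(\mathbb{C}^k)) = rk$ by proving the two matching inequalities separately, combining a cup-length calculation in cohomology for the lower bound with an explicit categorical open cover for the upper bound.

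For the lower bound I would work in the cohomology ring $H^{*}(G_r(\mathbb{C}^k); R)$ with a convenient coefficient ring $R$. The Grassmannian has a well-known presentation in which the cohomology is generated by the Chern classes of the tautological subbundle (or, dually, of the quotient bundle), subject to the Whitney duality relations. The key step is to exhibit a product of $rk-1$ positive-degree cohomology classes that is nonzero in $H^{*}(G_r(\mathbb{C}^k); R)$: since in the normalization used in the paper cat of a point equals $1$, any such nonzero product forces cat$(G_r(\mathbb{C}^k)) \geq rk$. The natural candidate factors are powers of the first Chern class combined with appropriate Schubert classes, and nontriviality of the product reduces to a computation in the quotient ring presenting $H^{*}(G_r(\mathbb{C}^k))$.

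For the upper bound cat$(G_r(\mathbb{C}^k)) \leq rk$ the plan is to construct an open cover $\{U_1, \ldots, U_{rk}\}$ in which every $U_i$ is contractible inside $G_r(\mathbb{C}^k)$. A natural construction uses the Schubert cell decomposition: each Schubert cell is affine, and a suitable gradient-like flow thickens every cell into an open neighborhood that contracts in the ambient Grassmannian. One then groups these neighborhoods according to a combinatorial invariant such as the row-sum of the indexing Young diagram, so that the bundled cover consists of exactly $rk$ open sets, each contractible in $G_r(\mathbb{C}^k)$.

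The principal obstacle is matching the two bounds precisely at $rk$. On the algebraic side one must produce a cup-length calculation that identifies classes linearly independent modulo the ideal of relations; on the geometric side, the consolidation of Schubert strata into exactly $rk$ contractible-in-$G_r(\mathbb{C}^k)$ open sets demands a delicate combinatorial argument to justify the count. Since this theorem is attributed to Berstein, an alternative route is to follow his original obstruction-theoretic method for computing the LS-category of Grassmannians, which avoids the direct construction of an explicit open cover while still extracting the sharp value $rk$.
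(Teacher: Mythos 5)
The paper does not prove this statement: it is imported verbatim from Berstein's 1976 article and then used as a black box in the Stiefel--Grassmann example of Section 5, so there is no internal argument to compare yours against. Your two-sided strategy (cup-length from below, a categorical cover or an obstruction-theoretic bound from above) is indeed the standard route to such computations, but as written it is a plan rather than a proof, and its decisive step fails numerically against the value $rk$ claimed in the statement.

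Concretely: with the usual meaning of $G_{r}(\mathbb{C}^{k})$ ($r$-planes in $\mathbb{C}^{k}$) the manifold has real dimension $2r(k-r)$, so every positive-degree cohomology class has real degree at least $2$ and any product of $rk-1$ such classes lies in degree at least $2(rk-1)$, which exceeds $2r(k-r)$ as soon as $r\geq 2$. Hence the nonzero product of length $rk-1$ you propose to exhibit cannot exist for $r\geq 2$; the actual cup-length is $r(k-r)$ (realized by $c_{1}^{r(k-r)}$), which in the paper's normalization (cat of a point equal to $1$) gives cat equal to $r(k-r)+1$, agreeing with $rk$ only when $r=1$. The same mismatch afflicts your upper bound: grouping the Schubert cells by complex dimension produces $r(k-r)+1$ categorical open sets, not $rk$. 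So either the theorem as transcribed uses a nonstandard convention (for instance, $r$-planes in $\mathbb{C}^{r+k}$ together with the reduced normalization of cat), or the formula is garbled --- compare the companion statement on $G_{r}(\mathbb{H}^{k})$, given as $k(r-k)$, which is negative under the literal reading. Before investing in the cup-length computation you must pin down which Grassmannian and which normalization are intended; once that is fixed, your outline (or Berstein's original obstruction-theoretic argument, which is really needed in the real case, where the $\mathbb{Z}_{2}$ cup-length can fall short of the dimension) does go through, but it will not produce the number $rk$ as stated.
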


\begin{theorem}\label{t5}\cite{VirgosSaezTanre:2017}
	The LS-category of the quaternionic Grassmannian $G_{r}(\mathbb{H}^{k})$, denoted by cat$(G_{r}(\mathbb{H}^{k}))$, equals $k(r-k)$.
\end{theorem}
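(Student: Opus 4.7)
Since the theorem is a cited result from \cite{VirgosSaezTanre:2017}, I would follow the standard two-sided argument used for computing the LS-category of a flag-type variety. The plan is to sandwich $\text{cat}(G_{r}(\mathbb{H}^{k}))$ between matching upper and lower bounds, using the cohomology ring on one side and a geometric cover by categorical open sets on the other.

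For the lower bound I would invoke the cup-length estimate: in the unnormalized convention used throughout this paper, $\text{cat}(X)\ge \text{cup-length}(X)+1$. The integral cohomology $H^{*}(G_{r}(\mathbb{H}^{k});\mathbb{Z})$ is torsion-free and, by Borel's classical computation, is a quotient of a polynomial algebra on the symplectic Pontryagin classes of the tautological quaternionic bundle, modulo the relations coming from the Whitney-sum decomposition of the trivial bundle. A Schubert-calculus computation, directly parallel to Berstein's proof of Theorem \ref{t3} for the complex Grassmannian but with all degrees doubled, produces an explicit nonzero cup product of positive-degree classes whose length matches the claimed value $k(r-k)$. Combinatorially this amounts to selecting the unique Schubert monomial whose degree equals the top cohomological degree of the Grassmannian.

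For the upper bound I would use one of two essentially equivalent strategies. Either (i) group the (quaternionic) Schubert cells of $G_{r}(\mathbb{H}^{k})$ into a family of open sets, each contractible inside the Grassmannian, and check that the cardinality of this family agrees with the desired number; or (ii) apply the Ganea/Strom connectivity bound $\text{cat}(X)\le \dim(X)/(\text{conn}(X)+1)+1$, noting that $G_{r}(\mathbb{H}^{k})$ is $3$-connected and its real dimension is exactly four times $k(r-k)$, so the estimate collapses to the correct value. Combining the two inequalities gives the equality.

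The main obstacle is clearly the lower bound: one has to identify a specific top-length nonzero monomial in the symplectic Pontryagin classes, which forces one to work explicitly with the defining relations of $H^{*}(G_{r}(\mathbb{H}^{k});\mathbb{Z})$ and the quaternionic analogue of Schubert combinatorics. By contrast, the upper bound is largely a bookkeeping exercise once either the cell structure or the connectivity estimate is in hand.
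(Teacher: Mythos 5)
First, a point of comparison: the paper does not prove this statement at all. Theorem \ref{t5} is quoted from \cite{VirgosSaezTanre:2017} and used as a black box in the Stiefel--Grassmann example of Section \ref{sec:4}, so there is no internal proof to measure your argument against. The cited source itself does not follow your route: as its title indicates, it obtains the category of quaternionic Grassmannians from Morse--Bott theory (a natural Morse--Bott function supplies an explicit categorical cover), rather than from the cup-length/dimension sandwich. Your sandwich is nevertheless a legitimate alternative in outline, and its two halves are the right ones to try.

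However, as written your argument has two genuine gaps. The first is numerical: the two bounds you quote do not ``collapse'' to the value in the statement. In the unnormalized convention you adopt ($\mathrm{cat}\ge \mathrm{cup}+1$ and $\mathrm{cat}\le \dim/(\mathrm{conn}+1)+1$), the sandwich closes at $r(k-r)+1$, whereas the theorem asserts $k(r-k)$, which is $\le 0$ whenever $r\le k$ and therefore cannot be an LS-category in any convention; likewise the real dimension of $G_{r}(\mathbb{H}^{k})$ is $4r(k-r)$, not ``four times $k(r-k)$.'' The statement in the paper is evidently a misquotation of the source, and a correct proof must end at $r(k-r)$ (normalized) or $r(k-r)+1$ (this paper's convention); you should have flagged the mismatch instead of asserting that your bounds meet the stated value. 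The second gap is the crux of the lower bound: you assert that quaternionic Schubert calculus is ``directly parallel\ldots with all degrees doubled,'' but that is precisely what needs proof. The clean justification is Borel's description of the cohomology of an equal-rank homogeneous space, $H^{*}(G_{r}(\mathbb{H}^{k});\mathbb{Z})\cong \mathbb{Z}[q_{1},\dots,q_{r},\bar{q}_{1},\dots,\bar{q}_{k-r}]/(q\cdot\bar{q}=1)$, which identifies the ring with that of the complex Grassmannian with degrees doubled; only then does $q_{1}^{\,r(k-r)}\neq 0$ follow from the known complex computation. Finally, in your cell-based upper bound you must specify the grouping (cells of equal dimension, using simple connectivity to make each such union categorical); the number of groups is the number of occurring dimensions, $r(k-r)+1$, not the number $\binom{k}{r}$ of Schubert cells.
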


\quad It is possible to compute TC or TC$_{n}$ of a fibration as well as TC of a path-connected topological space \cite{Pavesic:2019,VirgosLoisSaez:2021,IsKaraca:2022}. 

\begin{definition} \cite{Pavesic:2019,VirgosLoisSaez:2021,IsKaraca:2022}
	Let $q : E \rightarrow B$ be a surjective fibration.
	\begin{itemize}
		\item \textbf{(SG)} For the fibration $\pi^{''} : PE \rightarrow E \times B$ defined as $\pi^{''}(\gamma) = (\gamma(0),q \circ \gamma(1))$, \textit{the topological complexity of $q$}, denoted by TC$(q)$, is genus$(\pi^{''})$.
		\item \textbf{(HD)} Let $\pi_{1} : E \times B \rightarrow E$ be the first projection map, and $\pi_{2} : E \times B \rightarrow B$ the second projection map. Then \textit{the topological complexity of $q$} is D$(\pi_{1},\pi_{2})$.
	\end{itemize}	
\end{definition}

\begin{proposition}\label{p7}\cite{Pavesic:2019}
	Let $q_{1} : E_{1} \rightarrow B_{1}$ and $q_{2} : E_{2} \rightarrow B_{2}$ be two fibrations. Then \begin{eqnarray*}
		\max\{\text{TC}(q_{1}),\text{TC}(q_{2})\} \leq \text{TC}(q_{1} \times q_{2}) \leq \text{TC}(q_{1}) + \text{TC}(q_{2})-1.
	\end{eqnarray*}
\end{proposition}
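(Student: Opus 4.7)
The plan is to reduce both inequalities to classical product properties of the Schwarz genus. Working with the (SG) definition, TC$(q_{1} \times q_{2})$ equals genus$(\pi^{''})$, where $\pi^{''} : P(E_{1} \times E_{2}) \rightarrow (E_{1} \times E_{2}) \times (B_{1} \times B_{2})$ is given by $\gamma \mapsto (\gamma(0), (q_{1} \times q_{2})(\gamma(1)))$. Using the canonical homeomorphism $P(E_{1} \times E_{2}) \cong PE_{1} \times PE_{2}$ together with the coordinate-shuffling homeomorphism $(E_{1} \times E_{2}) \times (B_{1} \times B_{2}) \cong (E_{1} \times B_{1}) \times (E_{2} \times B_{2})$, I would identify $\pi^{''}$ with the product $\pi_{1}^{''} \times \pi_{2}^{''}$ of the two fibrations that compute TC$(q_{1})$ and TC$(q_{2})$, so that TC$(q_{1} \times q_{2}) = $ genus$(\pi_{1}^{''} \times \pi_{2}^{''})$ and the whole statement becomes a product estimate for Schwarz genus.

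For the lower bound, I would fix a point $(e_{2}^{0}, b_{2}^{0}) \in E_{2} \times B_{2}$ lying in the image of $\pi_{2}^{''}$ and restrict any open cover $\{W_{1}, \ldots, W_{r}\}$ of $(E_{1} \times B_{1}) \times (E_{2} \times B_{2})$ supporting local sections of $\pi_{1}^{''} \times \pi_{2}^{''}$ to the slice $(E_{1} \times B_{1}) \times \{(e_{2}^{0}, b_{2}^{0})\}$. Postcomposing each restricted section with the projection $PE_{1} \times PE_{2} \rightarrow PE_{1}$ produces continuous local sections of $\pi_{1}^{''}$ over an open cover of $E_{1} \times B_{1}$ of cardinality at most $r$, giving TC$(q_{1}) \leq r = $ TC$(q_{1} \times q_{2})$; the symmetric argument yields the same bound for TC$(q_{2})$.

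For the upper bound, I would invoke the standard product subadditivity of the Schwarz genus. Starting from optimal open covers $\{U_{i}\}_{i=1}^{m}$ of $E_{1} \times B_{1}$ and $\{V_{j}\}_{j=1}^{n}$ of $E_{2} \times B_{2}$ with sections $\sigma_{i}$ of $\pi_{1}^{''}$ and $\tau_{j}$ of $\pi_{2}^{''}$, the $mn$ product sets $U_{i} \times V_{j}$ carry product sections $\sigma_{i} \times \tau_{j}$; grouping them along antidiagonals $W_{k} = \bigsqcup_{i+j=k+1}(U_{i} \times V_{j})$ for $k = 1, \ldots, m+n-1$ should yield a cover of cardinality $m + n - 1$ with local sections of $\pi_{1}^{''} \times \pi_{2}^{''}$, so that TC$(q_{1} \times q_{2}) \leq m + n - 1 = $ TC$(q_{1}) + $ TC$(q_{2}) - 1$. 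The main obstacle I anticipate is realizing these antidiagonal unions as genuinely disjoint, so that the product sections patch continuously on each $W_{k}$; this requires a shrinking argument relying on a normality-type hypothesis on $B_{1} \times B_{2}$, which is standard in Schwarz's original framework and implicit in the cited source.
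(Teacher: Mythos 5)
The paper offers no proof of this proposition: it is quoted verbatim from Pave\v{s}i\'{c}'s article on the topological complexity of a map, so there is nothing internal to compare your argument against; I can only judge it on its own terms. Your reduction is sound: under $P(E_{1}\times E_{2})\cong PE_{1}\times PE_{2}$ and the shuffle $(E_{1}\times E_{2})\times(B_{1}\times B_{2})\cong(E_{1}\times B_{1})\times(E_{2}\times B_{2})$ the fibration computing $\mathrm{TC}(q_{1}\times q_{2})$ is literally $\pi_{1}^{''}\times\pi_{2}^{''}$, and your slice argument for the lower bound is complete and correct (note it silently uses that every point of $E_{2}\times B_{2}$ is in the image of $\pi_{2}^{''}$, which holds because the paper's definition takes $q_{2}$ surjective with $E_{2}$ path-connected). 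An alternative in the spirit of this paper would be to run the lower bound through homotopic distance and Proposition \ref{p11}, but your direct cover manipulation is equally valid.

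The one real issue is the one you flag yourself in the upper bound. As written, $W_{k}=\bigcup_{i+j=k+1}(U_{i}\times V_{j})$ is \emph{not} a disjoint union --- for $(i,j)\neq(i',j')$ on the same antidiagonal the intersection $(U_{i}\cap U_{i'})\times(V_{j}\cap V_{j'})$ is generally nonempty, so the product sections need not agree on overlaps and do not patch. The standard repair is not merely a ``shrinking argument'' but the partition-of-unity trick: choose partitions of unity $\{f_{i}\}$ and $\{g_{j}\}$ subordinate to the two covers and replace $U_{i}\times V_{j}$ by
\begin{eqnarray*}
W_{ij}=\bigl\{(x,y) : f_{i}(x)g_{j}(y)>f_{i'}(x)g_{j'}(y) \ \text{for all} \ (i',j')\neq(i,j) \ \text{with} \ i'+j'=i+j\bigr\},
\end{eqnarray*}
whose antidiagonal unions are genuinely disjoint open refinements. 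This requires $E_{1}\times B_{1}$ and $E_{2}\times B_{2}$ to admit such partitions of unity (paracompactness or at least normality), a hypothesis that is absent from the proposition as quoted here but is part of the standing niceness assumptions in the cited source. So your proof is correct in outline and your lower bound is complete; to close the upper bound you must either import Schwarz's product theorem as a black box or carry out the partition-of-unity construction explicitly, and in either case you should record the normality hypothesis that makes it work.
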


\begin{proposition}\label{p9}\cite{Pavesic:2019}
	Let $q : E \rightarrow B$ be a fibration. Then 
	\begin{eqnarray*}
		\text{cat}(B) \leq \text{TC}(q) \leq \min\{\text{TC}(B),\text{cat}(E \times B)\}.
	\end{eqnarray*}
\end{proposition}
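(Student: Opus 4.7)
The plan is to establish the three inequalities separately, exploiting the homotopic-distance reformulation $\text{TC}(q) = D(q\circ\pi_{1},\pi_{2})$ (both composites viewed as maps $E\times B \to B$, matching the Schwarz-genus definition $\pi''(\gamma) = (\gamma(0),q\circ\gamma(1))$). With this reading, parts (c)/(d) of Proposition \ref{p11} become the workhorses.

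For the lower bound $\text{cat}(B) \leq \text{TC}(q)$, I would fix a point $e_{0}\in E$ and use the inclusion $i : B \to E\times B$, $b\mapsto (e_{0},b)$. Then $q\circ\pi_{1}\circ i$ is the constant map $c_{q(e_{0})}$ and $\pi_{2}\circ i = 1_{B}$. Applying Proposition \ref{p11}(d) with $p_{1}=p_{2}=i$ (trivially homotopic) gives
\begin{eqnarray*}
D(c_{q(e_{0})},1_{B}) \;\leq\; D(q\circ\pi_{1},\pi_{2}) \;=\; \text{TC}(q).
\end{eqnarray*}
An open set $U\subseteq B$ supports a homotopy between $c_{q(e_{0})}|_{U}$ and $1_{B}|_{U}$ precisely when $U$ is contractible in $B$, so $D(c_{q(e_{0})},1_{B}) = \text{cat}(B)$ and the lower bound follows.

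For the first upper bound $\text{TC}(q) \leq \text{TC}(B)$, I would push everything into $B\times B$ via $q\times 1_{B}: E\times B \to B\times B$. Writing $p_{1}^{B},p_{2}^{B}: B\times B \to B$ for the projections, one has $p_{1}^{B}\circ(q\times 1_{B}) = q\circ\pi_{1}$ and $p_{2}^{B}\circ(q\times 1_{B}) = \pi_{2}$. Proposition \ref{p11}(d) then yields
\begin{eqnarray*}
\text{TC}(q) \;=\; D(q\circ\pi_{1},\pi_{2}) \;\leq\; D(p_{1}^{B},p_{2}^{B}) \;=\; \text{TC}(B).
\end{eqnarray*}

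For the second upper bound $\text{TC}(q) \leq \text{cat}(E\times B)$, I would take a categorical cover $\{V_{1},\dots,V_{k}\}$ of $E\times B$, so that each inclusion $V_{j}\hookrightarrow E\times B$ is null-homotopic. Composition with $q\circ\pi_{1}$ and with $\pi_{2}$ makes both restrictions $q\circ\pi_{1}|_{V_{j}}$ and $\pi_{2}|_{V_{j}}$ null-homotopic maps into $B$. Because $B$ is path-connected (it is the base of a fibration whose fiber is assumed path-connected), any two constant maps to $B$ are homotopic, hence $q\circ\pi_{1}|_{V_{j}} \simeq \pi_{2}|_{V_{j}}$ on each $V_{j}$; this cover witnesses $\text{TC}(q)\leq \text{cat}(E\times B)$, and taking the minimum with the previous bound finishes the argument.

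The main obstacle I anticipate is purely bookkeeping: making sure the domains and codomains line up so that Proposition \ref{p11}(d) really applies (in particular, that $q\circ\pi_{1}$ and $\pi_{2}$ share the common target $B$), and invoking path-connectedness of $B$ at exactly the right moment to conclude homotopy of the two constants in the cat-bound step. No deeper input beyond Proposition \ref{p11} and the HD descriptions of cat and TC should be needed.
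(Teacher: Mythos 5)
The paper does not prove this statement at all: Proposition \ref{p9} is quoted from Pave\v{s}i\'c's paper \cite{Pavesic:2019}, where it is established directly in terms of the Schwarz genus of $\pi'' : PE \to E\times B$ by explicitly constructing local sections. Your argument is therefore not comparable to an in-paper proof, but judged on its own it is correct and is the natural ``homotopic distance'' rendering of the result: you rightly read the paper's (imprecisely stated) HD definition as $\mathrm{TC}(q) = \mathrm{D}(q\circ\pi_1,\pi_2)$ with both maps landing in $B$, and each of the three inequalities then follows from Proposition \ref{p11}(d) exactly as you describe. Two small points deserve attention. First, in the lower bound you use $\mathrm{cat}(B)=\mathrm{D}(1_B,c_{q(e_0)})$, whereas the paper defines $\mathrm{cat}$ via the two maps $i_1,i_2 : B\to B\times B$; these agree for path-connected $B$ (project to get $\mathrm{D}(1_B,c)\leq \mathrm{D}(i_1,i_2)$, and conversely a nullhomotopy of $1_B|_U$ yields $i_1|_U\simeq (c,c)\simeq i_2|_U$), but since you invoke a characterization the paper never states, you should either prove this equivalence or cite it. Second, your justification of path-connectedness of $B$ (``base of a fibration whose fiber is path-connected'') is not a valid inference; the correct reason is that $q$ is assumed surjective with $E$ path-connected, or simply the paper's standing convention that all spaces are path-connected. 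What your approach buys, compared with the genus argument in \cite{Pavesic:2019}, is uniformity: all three inequalities become instances of the single functoriality property \ref{p11}(d), with no open covers or explicit sections needed except in the $\mathrm{cat}(E\times B)$ step.
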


\quad The general case of TC$(q)$ is given in \cite{IsKaraca:2022} with the following definition:

\begin{definition}\cite{IsKaraca:2022}
	Let $q : (q_{1},q_{2},\cdots,q_{n}) : E \rightarrow B^{n}$ be a surjective fibration.
	\begin{itemize}
		\item \textbf{(SG)} For a fibration $e_{n}^{q} : P_{n}(E) \rightarrow Y^{n}$, \textit{the $n-$th (higher) topological complexity of $q$} is genus$(e_{n}^{q})$.
		\item \textbf{(HD)} For each projection map $p_{i} : E^{n} \rightarrow E$ with $i \in \{1,2,\cdots,n\}$, \textit{the $n$-th (higher) topological complexity of  $q$} is D$(q \circ p_{1},q \circ p_{2},\cdots,q \circ p_{n})$.
	\end{itemize}	 
\end{definition}

\begin{theorem}\cite{IsKaraca:2022}\label{t1}
	If $q_{1} \simeq q_{2} : E \rightarrow B^{n}$ are homotopic fibrations, then 
	\[\text{TC}_{n}(q_{1}) = \text{TC}_{n}(q_{2}).\]
\end{theorem}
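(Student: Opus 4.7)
The plan is to work with the homotopic distance (HD) formulation of the higher topological complexity of a fibration, since this is precisely the setting in which homotopy invariance follows from a prior result already proved in the excerpt. By definition, for $q : E \to B^{n}$ we have $\text{TC}_{n}(q) = \text{D}(q \circ p_{1}, q \circ p_{2}, \ldots, q \circ p_{n})$, where $p_{i} : E^{n} \to E$ is the $i$-th projection. Thus both sides of the desired equality are already presented as higher homotopic distances, and our job reduces to comparing two tuples of maps that differ only by replacing $q_{1}$ with $q_{2}$ in every coordinate.

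First, I would record that a homotopy $H : E \times I \to B^{n}$ between $q_{1}$ and $q_{2}$ produces, by precomposition with $p_{i} \times \mathrm{id}_{I}$, a homotopy between $q_{1} \circ p_{i}$ and $q_{2} \circ p_{i}$ for every $i \in \{1,2,\ldots,n\}$. In other words, the tuple $(q_{1} \circ p_{1}, \ldots, q_{1} \circ p_{n})$ is componentwise homotopic to $(q_{2} \circ p_{1}, \ldots, q_{2} \circ p_{n})$, with both tuples having common domain $E^{n}$ and common codomain $B^{n}$.

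Next, I would invoke Proposition \ref{p11}(a), which states that if $f_{j} \simeq g_{j}$ for each $j$, then $\text{D}(f_{1}, \ldots, f_{n}) = \text{D}(g_{1}, \ldots, g_{n})$. Applying this with $f_{j} = q_{1} \circ p_{j}$ and $g_{j} = q_{2} \circ p_{j}$ gives
\begin{equation*}
\text{D}(q_{1} \circ p_{1}, \ldots, q_{1} \circ p_{n}) = \text{D}(q_{2} \circ p_{1}, \ldots, q_{2} \circ p_{n}),
\end{equation*}
which by the HD definition is exactly $\text{TC}_{n}(q_{1}) = \text{TC}_{n}(q_{2})$, as desired.

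There is no real obstacle here: the definitional setup has been arranged so that the statement is essentially a corollary of Proposition \ref{p11}(a). The only subtlety worth flagging is that the hypothesis ``homotopic fibrations'' is used only in its weakest form, namely that $q_{1}$ and $q_{2}$ are homotopic as continuous maps; the fibration property itself is not needed to run the argument, but it is required for the higher topological complexities $\text{TC}_{n}(q_{1})$ and $\text{TC}_{n}(q_{2})$ to be defined in the first place.
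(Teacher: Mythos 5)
Your argument is correct: composing the homotopy $q_{1}\simeq q_{2}$ with each projection $p_{i}$ gives $q_{1}\circ p_{i}\simeq q_{2}\circ p_{i}$, and Proposition \ref{p11}(a) then yields the equality of the two higher homotopic distances, which is exactly the HD definition of $\text{TC}_{n}(q_{1})=\text{TC}_{n}(q_{2})$. The paper itself quotes this theorem from \cite{IsKaraca:2022} without reproducing a proof, but your route is the natural one and matches the definitional setup given here.
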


\begin{theorem}\label{t6}\cite{BasabeGonzalezRudyakTamaki:2014}
	Let $X$ be a path-connected space. Then
	\begin{eqnarray*}
		\text{cat}(X^{n-1}) \leq \text{TC}_{n}(X) \leq \text{cat}(X^{n}).
	\end{eqnarray*}
\end{theorem}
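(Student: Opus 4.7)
The plan is to prove the two inequalities separately, both times working with the homotopic-distance formulations of $\text{TC}_{n}$ and $\text{cat}$ rather than with sections of fibrations.

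For the upper bound $\text{TC}_{n}(X) \leq \text{cat}(X^{n})$, I would fix an open cover $\{U_{1},\ldots,U_{r}\}$ of $X^{n}$ with $r = \text{cat}(X^{n})$ such that each inclusion $U_{k} \hookrightarrow X^{n}$ is null-homotopic, say through a homotopy ending at a constant point $(c_{1}^{k},\ldots,c_{n}^{k}) \in X^{n}$. Post-composing with the $i$-th projection $p_{i}:X^{n}\to X$ immediately gives $p_{i}|_{U_{k}} \simeq c_{i}^{k}$ in $X$, and because $X$ is path-connected any two constant maps into $X$ are freely homotopic along a path between their values. Thus $p_{i}|_{U_{k}} \simeq p_{j}|_{U_{k}}$ for every $i,j$, and the definition of higher homotopic distance yields $D(p_{1},\ldots,p_{n}) \leq r$.

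For the lower bound $\text{cat}(X^{n-1}) \leq \text{TC}_{n}(X)$, my strategy is to use the precomposition estimate of Proposition \ref{p11}(d). Choose a basepoint $x_{0}\in X$ and consider the slice inclusion $j : X^{n-1} \to X^{n}$, $j(y_{1},\ldots,y_{n-1}) = (x_{0},y_{1},\ldots,y_{n-1})$. Taking every precomposition map in Proposition \ref{p11}(d) equal to $j$ (trivially homotopic to itself) gives
\[ D(p_{1}\circ j,\, p_{2}\circ j,\, \ldots,\, p_{n}\circ j) \,\leq\, D(p_{1},\ldots,p_{n}) \,=\, \text{TC}_{n}(X). \]
By construction $p_{1}\circ j$ is the constant map at $x_{0}$, while for $i\geq 2$ the map $p_{i}\circ j$ is the projection of $X^{n-1}$ onto its $(i-1)$-st coordinate.

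The remaining step is to identify the left-hand side with $\text{cat}(X^{n-1})$. Given an open cover $\{V_{1},\ldots,V_{s}\}$ of $X^{n-1}$ realising this distance, on each $V_{\ell}$ every $p_{i}\circ j|_{V_{\ell}}$ is null-homotopic into $X$; bundling these $n-1$ componentwise null-homotopies produces a single map $V_{\ell}\times I \to X^{n-1}$ that contracts the inclusion $V_{\ell}\hookrightarrow X^{n-1}$ to the point $(x_{0},\ldots,x_{0})$. Hence each $V_{\ell}$ is categorical in $X^{n-1}$, so $\text{cat}(X^{n-1}) \leq s$. The one delicate point, which I regard as the main obstacle, is precisely this packaging step: one must invoke the universal property of the product to combine the $n-1$ componentwise null-homotopies into a single contraction in $X^{n-1}$. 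Once this is dispatched the chain of inequalities closes and the theorem follows.
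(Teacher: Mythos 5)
Your argument is correct, and it is worth noting that the paper itself offers no proof of this statement: Theorem \ref{t6} is simply quoted from Basabe--Gonz\'alez--Rudyak--Tamaki, where it is established in the Schwarz-genus picture (the upper bound is the general fact that $\text{secat}(p)\leq\text{cat}(B)$ for a fibration $p$ with path-connected total space, and the lower bound comes from restricting the fibration $e_{n}$ over a slice $\{x_{0}\}\times X^{n-1}$). Your route is the homotopic-distance translation of the same two ideas, and both halves go through: for the upper bound, post-composing a contracting homotopy of $U_{k}\hookrightarrow X^{n}$ with the projections and then sliding the resulting constant maps along paths (using path-connectedness of $X$) does give $p_{1}|_{U_{k}}\simeq\cdots\simeq p_{n}|_{U_{k}}$; for the lower bound, Proposition \ref{p11}(d) applied to the slice inclusion $j$ is exactly the right tool, and the ``packaging'' step you flag is indeed harmless, since the $n-1$ componentwise homotopies $H_{i}:V_{\ell}\times I\to X$ assemble by the universal property of the product into $(H_{1},\ldots,H_{n-1}):V_{\ell}\times I\to X^{n-1}$, a contraction of $V_{\ell}$ in $X^{n-1}$ to $(x_{0},\ldots,x_{0})$. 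The only point you use silently is the equivalence, for path-connected spaces, between the genus/distance definition of $\text{cat}$ adopted in Section \ref{subsec:2} and the ``open cover by sets with null-homotopic inclusion'' characterisation you work with on both sides; this equivalence is standard (a local section of $\eta$ over $U$ is precisely a contracting homotopy of $U$ and conversely), but it deserves one sentence. With that remark added, your proof is a clean, self-contained distance-theoretic derivation of a result the paper only cites.
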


\subsection{Different Types of Topological Complexity}
\label{subsec:3}

\quad First, we recall the notions the relative (subspace) homotopic distance and the relative topological complexity. Later, we remind the definition of the parametrised topological complexity.

\begin{definition}\cite{VirgosLoisSaez:2021}
	Let $f$, $g : A \rightarrow B$ be two continuous maps with the subset $Y \subset A$. Then \textit{the relative homotopic distance (or subspace distance) on $Y$} is the homotopic distance of two maps $f|_{Y}$ and $g|_{Y}$, informal saying, 
	\[\text{D}_{A}(Y;f,g) = \text{D}(f|_{Y},g|_{Y}).\]
\end{definition}

\begin{definition}\cite{Farber:2008,VirgosLoisSaez:2021}
	Let $X$ be a path-connected topological space and \linebreak$Y \subset X \times X$.
	\begin{itemize}
		\item \textbf{(SG)} Let $\pi^{Y} : P_{Y}X \rightarrow Y$, $\pi^{Y}(\alpha) = (\alpha(0),\alpha(1))$, be a path fibration, where $P_{Y}X$ is a subset of $PX$ and contains all the paths in $X$ with the property that $(\alpha(0),\alpha(1))$ lies in $Y$. Then \textit{the relative topological complexity of $X$ with respect to the subspace $Y$} is genus$(\pi^{Y})$.
		\item \textbf{(HD)} Let $p_{i} : X^{2} \rightarrow X$ be the projection map for each $i \in \{1,2\}$ and $i_{Y} : Y \rightarrow X \times X$ be the inclusion map. Then \textit{the relative topological complexity of $X$ with respect to the subspace $Y$} is D$_{X \times X}(Y;p_{1},p_{2})$.
	\end{itemize}
\end{definition}

\quad The relative topological complexity of $X$ with respect to the subspace $Y$ is denoted by TC$_{Y}(X)$. 

\begin{definition}\cite{Short:2018}
	\textbf{(SG)} Let $A$ be a path-connected topological space and $B \subset A$.  Let $\pi^{A \times B} : P_{A \times B} \rightarrow A \times B$, $\pi^{A \times B}(\gamma) = (\gamma(0),\gamma(1))$, be a fibration, where $P_{A \times B}$ is the space of all paths in $A$ with the property that $\gamma(0) \in A$ and $\gamma(1) \in B$. Then \textit{the relative topological complexity of a pair $(A,B)$} is genus$(\pi^{A \times B})$.
\end{definition}

\quad The relative topological complexity of a pair $(A,B)$ is denoted by TC$(A,B)$. We mention a similar result to the ordinary TC number:

\begin{proposition}\cite{Short:2018}
	Let $(A,B)$ be the pair. Then
	\begin{eqnarray*}
		\text{TC}(A,B) = 1 \Leftrightarrow A \ \text{is contractible}.
	\end{eqnarray*}
\end{proposition}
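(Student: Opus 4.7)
The plan is to unfold the definition of $\text{TC}(A,B)$ and exhibit, in each direction of the biconditional, a single continuous section (resp.\ a contracting homotopy). The statement $\text{TC}(A,B)=1$ means there is exactly one open set in the cover, namely $A\times B$ itself, together with a continuous section $s: A\times B \to P_{A\times B}$ of $\pi^{A\times B}$, i.e.\ a continuous choice of a path in $A$ starting at $a$ and ending at $b$ for every $(a,b)\in A\times B$.

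For the implication $\text{TC}(A,B)=1 \Rightarrow A$ is contractible, I would fix any basepoint $b_{0}\in B$ (this uses $B\neq\emptyset$, which is implicit in the setup since otherwise the fibration has empty base), and restrict the section $s$ to the slice $A\times\{b_{0}\}$. Defining $H : A\times I \to A$ by $H(a,t) = s(a,b_{0})(t)$ yields a continuous homotopy with $H(a,0)=a$ and $H(a,1)=b_{0}$ by the section property $\pi^{A\times B}\circ s = 1_{A\times B}$. This is a null-homotopy of $1_{A}$, so $A$ is contractible.

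For the converse, fix a contraction $F : A\times I \to A$ with $F(a,0)=a$ and $F(a,1)=a_{0}$ for some $a_{0}\in A$. For each $(a,b)\in A\times B$, define $s(a,b)\in PA$ by the concatenation
\begin{eqnarray*}
 s(a,b)(t) = \begin{cases} F(a,2t), & 0\leq t \leq \tfrac{1}{2},\\ F(b,2-2t), & \tfrac{1}{2}\leq t \leq 1. \end{cases}
\end{eqnarray*}
The two branches agree at $t=\tfrac{1}{2}$ (both equal $a_{0}$), the whole path lies in $A$, and the endpoints are $(a,b)\in A\times B$, so $s(a,b)\in P_{A\times B}$ and $\pi^{A\times B}\circ s = 1_{A\times B}$. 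Continuity of $s$ in $(a,b)$ follows from joint continuity of $F$ and the exponential law, so $\text{TC}(A,B)=1$.

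I do not expect a real obstacle: the only subtle point is to make sure the constructed section actually lands in $P_{A\times B}$ (paths in $A$ with endpoints in $A\times B$), which is automatic because the contracting homotopy has image in $A$ and because $b_{0},a_{0}\in A$. Both directions are essentially the standard proof that $\text{TC}(X)=1 \Leftrightarrow X$ is contractible, adapted to the asymmetric setting where the terminal endpoint is constrained to the subspace $B$.
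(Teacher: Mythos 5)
Your argument is correct: the restriction of a global section to the slice $A\times\{b_{0}\}$ gives a null-homotopy of $1_{A}$, and conversely concatenating the contraction applied to $a$ with the reversed contraction applied to $b$ gives a global section landing in $P_{A\times B}$, which is exactly the standard proof of $\mathrm{TC}(X)=1\Leftrightarrow X$ contractible adapted to the pair setting. Note that the paper itself does not prove this proposition --- it is quoted from Short's work --- so there is nothing to compare against; your proof is the expected one and handles the only delicate points (nonemptiness of $B$ and continuity of the section via the exponential law) correctly.
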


\begin{definition}\label{d2}\cite{CohenFarberWein:2021}
	\textbf{(SG)} Let $q : E \rightarrow B$ be a fibration with nonempty, path-connected topological space $X = q^{-1}(b)$ for any $b \in B$. Let \[E^{I}_{B} = \{\beta \in PE : q \circ \beta \ \ \text{is constant}\} \subset PE\] and \[E \times_{B} E = \{(a,b) \in E \times E : q(a) = q(b)\} \subset E^{2}.\]
	Then for the fibration $\pi_{B} : E^{I}_{B} \rightarrow E \times_{B} E$ defined by $\pi_{B}(\beta) = \beta(0),\beta(1)$, \textit{the parametrised topological complexity of $q$}, denoted by TC$[q : E \rightarrow B]$, is genus$(\pi_{B})$.
\end{definition}

\begin{proposition}\label{p8}\cite{CohenFarberWein:2021}
	Let $q_{1} : E_{1} \rightarrow B_{1}$ and $q_{2} : E_{2} \rightarrow B_{2}$ be two fibrations with fibers $X_{1}$ and $X_{2}$, respectively, for metrisable spaces $E_{1}$, $E_{2}$, $B_{1}$, and $B_{2}$. For the fibration $q = q_{1} \times q_{2} : E_{1} \times E_{2} \rightarrow B_{1} \times B_{2}$ with the fiber $X_{1} \times X_{2}$, we have
	\begin{eqnarray*}
		\text{TC}[q : E_{1} \times E_{2} \rightarrow B_{1} \times B_{2}] \leq \text{TC}[q_{1} : E_{1} \rightarrow B_{1}] + \text{TC}[q_{2} : E_{2} \rightarrow B_{2}].
	\end{eqnarray*}
\end{proposition}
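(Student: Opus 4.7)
The plan is to reduce this inequality to the product (sub-)additivity of the Schwarz genus, by showing that the parametrised path-space fibration associated to $q_{1}\times q_{2}$ is, up to natural homeomorphism, exactly the product of the two parametrised path-space fibrations associated to $q_{1}$ and $q_{2}$. Writing $\pi_{B_{i}}:(E_{i})^{I}_{B_{i}}\to E_{i}\times_{B_{i}}E_{i}$ and $\pi_{B_{1}\times B_{2}}$ for the analogous map arising from $q=q_{1}\times q_{2}$, my goal is to identify $\pi_{B_{1}\times B_{2}}$ with $\pi_{B_{1}}\times \pi_{B_{2}}$ and then invoke the standard product bound for $\text{genus}(\cdot)$.

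First I would establish two natural homeomorphisms. A path $\beta=(\beta_{1},\beta_{2})$ in $E_{1}\times E_{2}$ satisfies the condition that $(q_{1}\times q_{2})\circ\beta$ is constant if and only if each $q_{i}\circ\beta_{i}$ is constant, giving
\begin{equation*}
(E_{1}\times E_{2})^{I}_{B_{1}\times B_{2}}\;\cong\;(E_{1})^{I}_{B_{1}}\times (E_{2})^{I}_{B_{2}}.
\end{equation*}
Similarly, a pair $((a_{1},a_{2}),(b_{1},b_{2}))$ lies in $(E_{1}\times E_{2})\times_{B_{1}\times B_{2}}(E_{1}\times E_{2})$ iff $q_{1}(a_{1})=q_{1}(b_{1})$ and $q_{2}(a_{2})=q_{2}(b_{2})$, so a coordinate swap yields
\begin{equation*}
(E_{1}\times E_{2})\times_{B_{1}\times B_{2}}(E_{1}\times E_{2})\;\cong\;(E_{1}\times_{B_{1}}E_{1})\times(E_{2}\times_{B_{2}}E_{2}).
\end{equation*}
Under these identifications, the map $\beta\mapsto(\beta(0),\beta(1))$ defining $\pi_{B_{1}\times B_{2}}$ becomes precisely $\pi_{B_{1}}\times\pi_{B_{2}}$, as both coordinates evaluate independently.

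Having reduced the statement to an assertion about Schwarz genus, I would next use the classical subadditivity under products: for fibrations $p:X\to Y$ and $p':X'\to Y'$ over metrisable (hence paracompact and normal) bases, one has $\text{genus}(p\times p')\le\text{genus}(p)+\text{genus}(p')-1$. Given a Schwarz-genus-style open cover $\{U_{i}\}_{i=1}^{k_{1}}$ of $E_{1}\times_{B_{1}}E_{1}$ with local sections $s_{i}$ of $\pi_{B_{1}}$, and an analogous cover $\{V_{j}\}_{j=1}^{k_{2}}$ of $E_{2}\times_{B_{2}}E_{2}$ with sections $t_{j}$ of $\pi_{B_{2}}$, one obtains the product cover $\{U_{i}\times V_{j}\}$ with sections $s_{i}\times t_{j}$; metrisability permits the standard refinement/shrinking procedure that combines the $k_{1}k_{2}$ pieces into $k_{1}+k_{2}-1$ open sets that still carry continuous sections. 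Combining this with the identification above gives
\begin{equation*}
\text{TC}[q:E_{1}\times E_{2}\to B_{1}\times B_{2}]=\text{genus}(\pi_{B_{1}\times B_{2}})\le\text{genus}(\pi_{B_{1}})+\text{genus}(\pi_{B_{2}}),
\end{equation*}
which is the claimed bound (in fact one gets the sharper bound with $-1$, which the proposition simply weakens).

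The main obstacle is the subadditivity of Schwarz genus under products; the identification step is a formal manipulation, but the grouping of $k_{1}k_{2}$ open sets into $k_{1}+k_{2}-1$ disjointly-sectionable pieces is where the metrisability hypothesis is truly used, and this is exactly why the proposition is stated in the metrisable setting. Once that ingredient is accepted, the remainder is bookkeeping via the product fibration identification.
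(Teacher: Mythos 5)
The paper offers no proof of this proposition: it is imported verbatim from Cohen--Farber--Weinberger \cite{CohenFarberWein:2021} as a preliminary, so there is nothing internal to compare against. Your argument is correct and is essentially the standard one (and the one in that reference): the homeomorphisms $(E_{1}\times E_{2})^{I}_{B_{1}\times B_{2}}\cong (E_{1})^{I}_{B_{1}}\times (E_{2})^{I}_{B_{2}}$ and $(E_{1}\times E_{2})\times_{B_{1}\times B_{2}}(E_{1}\times E_{2})\cong (E_{1}\times_{B_{1}}E_{1})\times(E_{2}\times_{B_{2}}E_{2})$ identify $\pi_{B_{1}\times B_{2}}$ with $\pi_{B_{1}}\times\pi_{B_{2}}$, and then Schwarz's product inequality $\mathrm{genus}(p\times p')\le \mathrm{genus}(p)+\mathrm{genus}(p')-1$ over paracompact bases finishes the job; metrisability of the $E_{i}$ (hence of $E_{i}\times_{B_{i}}E_{i}$) is exactly what licenses the partition-of-unity argument that merges the $k_{1}k_{2}$ product sets into $k_{1}+k_{2}-1$ disjointly-sectionable pieces. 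Your closing observation is also apt: in the unreduced convention used throughout this paper (where $\mathrm{TC}$ of a contractible space is $1$), the argument actually yields $\mathrm{TC}[q]\le \mathrm{TC}[q_{1}]+\mathrm{TC}[q_{2}]-1$; the statement as printed drops the $-1$, apparently because it was transcribed from the reduced convention of \cite{CohenFarberWein:2021} without adjustment. The only place to tighten your write-up would be to spell out the disjointification step (e.g.\ via partitions of unity subordinate to the two covers), since that is the sole point where the hypothesis is used.
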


\section{Relative Higher Topological Complexity of a Space}
\label{sec:2}
\begin{definition}
	\textbf{(HD)} Let $X$ be a path-connected topological space and $Y \subseteq X^{n}$ be a subspace. Then the relative higher topological complexity is defined as \[\text{TC}_{n,X}(Y) = \text{D}_{X^{n}}(Y;p_{1}, \cdots, p_{n}),\] where $p_{i} : X^{n} \rightarrow X$ is a projection onto the $i-$th factor for each $i = 1, \cdots, n$.
\end{definition}

\quad In particular, if $Y$ is choosen as $X^{n}$, then we find TC$_{n,X}(X^{n}) =$ TC$_{n}(X)$. Indeed,
\begin{eqnarray*}
	\text{TC}_{n,X}(X^{n}) = \text{D}_{X^{n}}(X^{n};p_{1}, \cdots, p_{n}) = \text{D}(p_{1}, \cdots, p_{n}) = \text{TC}_{n}(X).
\end{eqnarray*}

\begin{proposition}
	Let $Y$ be a subspace of $X^{n}$ and $p_{i} : X^{n} \rightarrow X$ be the corresponding projection map for each $i = 1, \cdots, n$. Then TC$_{n,X}(Y) = 1$ if and only if the projections $(p_{i})|_{Y} : Y \rightarrow X$ with $i = 1, \cdots, n$ are homotopic to each other.
\end{proposition}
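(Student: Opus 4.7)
My plan is to prove this essentially by unfolding definitions; there is no substantial obstacle, since the statement is the direct translation of the case $r=1$ of the higher homotopic distance into the relative topological complexity setting.

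First, I would expand the left-hand side using the two definitions in play. By the definition of TC$_{n,X}(Y)$ and of the relative homotopic distance, we have
\begin{eqnarray*}
\text{TC}_{n,X}(Y) = \text{D}_{X^{n}}(Y;p_{1},\cdots,p_{n}) = \text{D}(p_{1}|_{Y},\cdots,p_{n}|_{Y}).
\end{eqnarray*}
So the statement becomes: $\text{D}(p_{1}|_{Y},\cdots,p_{n}|_{Y}) = 1$ if and only if $(p_{1})|_{Y}, \cdots, (p_{n})|_{Y}$ are pairwise homotopic.

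Next I would invoke the definition of the higher homotopic distance in the case $r=1$. By that definition, $\text{D}(f_{1},\cdots,f_{m}) = 1$ precisely when there exists an open cover of $A$ consisting of a single open set (which must then be all of $A$) on which $f_{1} \simeq f_{2} \simeq \cdots \simeq f_{m}$. Applying this with $A = Y$ and $f_{i} = p_{i}|_{Y}$ immediately yields the equivalence with the condition $p_{1}|_{Y} \simeq p_{2}|_{Y} \simeq \cdots \simeq p_{n}|_{Y}$.

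For the forward direction I would simply take the single cover element $Y$ itself and read off the homotopies; for the reverse direction I would likewise take $Y$ as the single covering open set, which is admissible since the required chain of homotopies holds globally on $Y$. The minimality of $r=1$ is automatic because the homotopic distance is a positive integer. Thus the proof amounts to a two-line chain of logical equivalences, and no genuinely hard step arises.
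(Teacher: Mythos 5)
Your proposal is correct and follows exactly the paper's own argument: unfold TC$_{n,X}(Y)$ into the relative homotopic distance, then observe that a distance of $1$ means the single-set cover $\{Y\}$ works, i.e., all the restricted projections are homotopic. The paper states this as a one-line chain of equivalences; your version merely spells out the same two definitional steps more explicitly.
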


\begin{proof}
	If $Y$ is a subspace of $X^{n}$, then we get
	\begin{eqnarray*}
		\text{TC}_{n,X}(Y) = 1 \Leftrightarrow \text{D}_{X^{n}}(Y;p_{1}, \cdots, p_{n}) = 1 \Leftrightarrow (p_{1})|_{Y} \simeq \cdots \simeq (p_{n})|_{Y}.
	\end{eqnarray*}
\end{proof}

\begin{proposition}\label{p1}
	Let $Y$ be a subspace of $X^{n}$. Then TC$_{n,X}(Y) \leq$ TC$_{n}(X)$.
\end{proposition}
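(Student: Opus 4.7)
The plan is to reduce the inequality to a direct application of Proposition \ref{p11}(d). Unwinding the definitions, $\text{TC}_n(X) = \text{D}(p_1, \ldots, p_n)$ and
\[
\text{TC}_{n,X}(Y) \;=\; \text{D}_{X^n}(Y; p_1, \ldots, p_n) \;=\; \text{D}(p_1|_Y, \ldots, p_n|_Y).
\]
Writing $\iota : Y \hookrightarrow X^n$ for the inclusion map, each restriction factors as $p_i|_Y = p_i \circ \iota$, so the claim becomes
\[
\text{D}(p_1 \circ \iota, \ldots, p_n \circ \iota) \;\leq\; \text{D}(p_1, \ldots, p_n).
\]

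To finish, I would invoke Proposition \ref{p11}(d) with $f_j := p_j$ and the inner family all equal to the single map $\iota$. The hypothesis that consecutive inner maps are homotopic is trivially satisfied (indeed, they are equal), and the conclusion is precisely the displayed inequality above.

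As an alternative, one could run the proof directly from the cover definition: start with an open cover $\{A_1, \ldots, A_r\}$ of $X^n$ realising $r = \text{TC}_n(X)$, so that $p_1|_{A_i} \simeq \cdots \simeq p_n|_{A_i}$ on each $A_i$. Then $\{A_i \cap Y\}$ is an open cover of $Y$ in its subspace topology, and the restrictions of the chosen homotopies to $A_i \cap Y$ show that $(p_j|_Y)|_{A_i \cap Y}$ are pairwise homotopic, yielding $\text{TC}_{n,X}(Y) \leq r$.

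I do not anticipate any substantive obstacle: both routes are essentially one step. The only thing to keep tidy is the notational distinction between the ambient projections $p_j$ on $X^n$ and their restrictions $p_j|_Y$, which the factorisation $p_j|_Y = p_j \circ \iota$ makes completely transparent. I would therefore present the one-line proof via Proposition \ref{p11}(d).
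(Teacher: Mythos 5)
Your main argument is exactly the paper's proof: write $p_j|_Y = p_j\circ i_Y$ for the inclusion $i_Y : Y \hookrightarrow X^n$ and apply Proposition \ref{p11}(d) to get $\mathrm{D}(p_1\circ i_Y,\ldots,p_n\circ i_Y)\le \mathrm{D}(p_1,\ldots,p_n)$. The alternative cover-restriction argument you sketch is also fine, but the one-line route via Proposition \ref{p11}(d) is what the paper does.
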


\begin{proof} If $Y$ is a subspace of $X^{n}$, then we have an inclusion map $i_{Y} : Y \rightarrow X^{n}$. Therefore, by (d) part of Proposition \ref{p11}, we observe that
	\begin{eqnarray*}
		\text{TC}_{n,X}(Y) &=& \text{D}_{X^{n}}(Y;p_{1}, \cdots, p_{n}) \\
		&=& \text{D}(p_{1} \circ i_{Y}, \cdots, p_{n} \circ i_{Y})\\
		&\leq& \text{D}(p_{1}, \cdots, p_{n}) \\
		&=& \text{TC}_{n}(X).
	\end{eqnarray*}
\end{proof}

\begin{proposition}\label{p3}
	Let $Y \subset Z \subset X^{n}$. Then TC$_{n,X}(Y) \leq$ TC$_{n,X}(Z)$.
\end{proposition}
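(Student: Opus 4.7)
The plan is to mimic the strategy already used in Proposition \ref{p1}, replacing the ambient inclusion $i_Y : Y \hookrightarrow X^n$ with the inclusion $i : Y \hookrightarrow Z$ between the nested subspaces. The key observation is that the restrictions of the projection maps factor through this inclusion, which lets us invoke the composition bound for higher homotopic distance recorded in Proposition \ref{p11}(d).

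More precisely, I would first unwind the definition: by construction,
\[
\text{TC}_{n,X}(Y) = \text{D}_{X^n}(Y; p_1, \ldots, p_n) = \text{D}\bigl(p_1|_Y, \ldots, p_n|_Y\bigr),
\]
and similarly $\text{TC}_{n,X}(Z) = \text{D}(p_1|_Z, \ldots, p_n|_Z)$. Next I would note that for each $i \in \{1, \ldots, n\}$ the restriction $p_i|_Y$ factors as $p_i|_Y = p_i|_Z \circ i$, where $i : Y \hookrightarrow Z$ is the inclusion. Taking this common map $i$ in each slot (so that the hypothesis $p_{j-1} \simeq p_j$ of Proposition \ref{p11}(d) holds trivially via equality), part (d) yields
\[
\text{D}\bigl(p_1|_Z \circ i, \ldots, p_n|_Z \circ i\bigr) \leq \text{D}\bigl(p_1|_Z, \ldots, p_n|_Z\bigr).
\]
Rewriting the left-hand side as $\text{D}(p_1|_Y, \ldots, p_n|_Y)$ gives the desired inequality $\text{TC}_{n,X}(Y) \leq \text{TC}_{n,X}(Z)$.

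There is no real obstacle here: the argument is essentially a one-line application of Proposition \ref{p11}(d), and Proposition \ref{p1} is recovered as the special case $Z = X^n$. The only point that deserves a moment of care is checking that the factorisation $p_i|_Y = p_i|_Z \circ i$ indeed puts us in the setting of part (d) (i.e., that each slot receives the same precomposition map, so the homotopy hypothesis is vacuous).
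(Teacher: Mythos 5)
Your proof is correct and follows essentially the same route as the paper: both factor the restriction to $Y$ through the inclusion $i : Y \hookrightarrow Z$ (the paper writes this as $i_Y = i_Z \circ i$, so $p_k \circ i_Y = (p_k \circ i_Z) \circ i$, which is exactly your $p_k|_Y = p_k|_Z \circ i$) and then apply Proposition \ref{p11}(d) with the same precomposed map in every slot. No gaps.
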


\begin{proof} Let $i_{Y} : Y \rightarrow X^{n}$, $i_{Z} : Z \rightarrow X^{n}$, and $i : Y \rightarrow Z$ be three inclusion maps. Then $i_{Y}$ can be rewritten as the composition of $i$ and $i_{Z}$ i.e., $i_{Y} = i_{Z} \circ i$. From Proposition \ref{p11} (d), we have
	\begin{eqnarray*}
		\text{TC}_{n,X}(Y) &=& \text{D}_{X^{n}}(Y;p_{1},\cdots,p_{n}) 
		= \text{D}(p_{1} \circ i_{Y},\cdots,p_{n} \circ i_{Y}) \\
		&=& \text{D}(p_{1} \circ i_{Z} \circ i,\cdots,p_{n} \circ i_{Z} \circ i) 
		\leq \text{D}(p_{1} \circ i_{Z},\cdots,p_{n} \circ i_{Z}) \\
		&=& \text{D}_{X^{n}}(Z;p_{1},\cdots,p_{n}) \\
		&=& \text{TC}_{n,X}(Z).
	\end{eqnarray*}
\end{proof}

\begin{proposition}\label{p2}
	If $Y$ is a retract of $X$, then TC$_{n,X}(Y^{n}) \geq$ TC$_{n}(Y)$.
\end{proposition}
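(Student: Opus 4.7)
The plan is to unwind the definitions and then apply part (c) of Proposition \ref{p11} with the retraction playing the role of the post-composing maps. By definition,
\[\text{TC}_{n,X}(Y^{n}) = \text{D}_{X^{n}}(Y^{n};p_{1},\cdots,p_{n}) = \text{D}(p_{1}|_{Y^{n}},\cdots,p_{n}|_{Y^{n}}),\]
where $p_{i}:X^{n}\to X$ is the $i$-th projection, while $\text{TC}_{n}(Y) = \text{D}(q_{1},\cdots,q_{n})$ for the projections $q_{i}:Y^{n}\to Y$. So the goal reduces to comparing the higher homotopic distance of the collection $\{p_{i}|_{Y^{n}}\}$ with that of the collection $\{q_{i}\}$.

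Since $Y$ is a retract of $X$, fix a retraction $r:X\to Y$, so that $r|_{Y}=1_{Y}$. The key observation is that on the subspace $Y^{n}\subseteq X^{n}$ the composition $r\circ p_{i}|_{Y^{n}}$ sends $(y_{1},\cdots,y_{n})$ to $r(y_{i})=y_{i}$, which is precisely $q_{i}(y_{1},\cdots,y_{n})$. Therefore
\[r\circ p_{i}|_{Y^{n}} = q_{i} \quad \text{for each } i\in\{1,\cdots,n\}.\]

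Now I would apply Proposition \ref{p11} (c) with $f_{j}:=p_{j}|_{Y^{n}}:Y^{n}\to X$ and with the post-composing maps all equal to $r:X\to Y$; the required homotopy condition $p_{j-1}\simeq p_{j}$ is trivially satisfied since each of these maps is literally $r$. The conclusion of that proposition yields
\[\text{D}(r\circ p_{1}|_{Y^{n}},\cdots,r\circ p_{n}|_{Y^{n}}) \leq \text{D}(p_{1}|_{Y^{n}},\cdots,p_{n}|_{Y^{n}}),\]
and substituting the identification from the previous paragraph, the left side is $\text{D}(q_{1},\cdots,q_{n})=\text{TC}_{n}(Y)$ and the right side is $\text{TC}_{n,X}(Y^{n})$, giving the desired inequality.

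There is essentially no main obstacle here; the only thing one must be careful about is the direction in which one applies Proposition \ref{p11} (c) — the naive attempt of regarding the inclusion $Y\hookrightarrow X$ as the post-composing map via part (c) produces the opposite (and useless) inequality $\text{TC}_{n,X}(Y^{n})\leq \text{TC}_{n}(Y)$, so one really has to exploit the retraction $r$ going the other way.
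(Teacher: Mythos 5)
Your argument is correct and is essentially identical to the paper's proof: both fix a retraction $r:X\to Y$, observe that $r\circ p_{i}\circ i_{Y^{n}}=q_{i}$, and apply Proposition \ref{p11}~(c) with $r$ as the post-composing map to obtain $\text{TC}_{n}(Y)=\text{D}(q_{1},\cdots,q_{n})\leq \text{D}(p_{1}|_{Y^{n}},\cdots,p_{n}|_{Y^{n}})=\text{TC}_{n,X}(Y^{n})$. Your closing remark about the direction of the inequality is a sensible sanity check but adds nothing beyond the paper's reasoning.
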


\begin{proof}
	Let $r : X \rightarrow Y$ be a retraction map. Let $p_{j} : X^{n} \rightarrow X$ and $q_{j} : Y^{n} \rightarrow Y$ be the projection maps for each $j = 1, \cdots, n$. By (c) part of Proposition \ref{p11}, we find
	\begin{eqnarray*}
		\text{TC}_{n,X}(Y^{n}) &=& \text{D}_{X^{n}}(Y^{n};p_{1},\cdots,p_{n})
		= \text{D}(p_{1} \circ i_{Y^{n}},\cdots,p_{n} \circ i_{Y^{n}}) \\
		&\geq& \text{D}(r \circ p_{1} \circ i_{Y^{n}},\cdots,r \circ p_{n} \circ i_{Y^{n}}) 
		= \text{D}(q_{1},\cdots,q_{n}) \\
		&=& \text{TC}_{n}(Y)
	\end{eqnarray*} 
    with considering the fact that $q_{j} = r \circ p_{j} \circ i_{Y^{n}}$ for each $j$.
\end{proof}

\begin{corollary}
	If $Y$ is a retract of $X$ then, TC$_{n}(Y) \leq$ TC$_{n}(X)$.
\end{corollary}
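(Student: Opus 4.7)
The plan is to chain together Propositions \ref{p2} and \ref{p1} with $Y^n$ as the bridging subspace of $X^n$. Since $Y$ is a retract of $X$, the product $Y^n$ sits inside $X^n$ as a subspace (indeed as a retract via the $n$-fold product of the retraction), so both propositions apply with $Y^n$ in the role of the distinguished subspace.

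First I would invoke Proposition \ref{p2}, which gives the lower bound
\[
\text{TC}_n(Y) \leq \text{TC}_{n,X}(Y^n).
\]
Then I would apply Proposition \ref{p1} to the subspace $Y^n \subseteq X^n$, yielding
\[
\text{TC}_{n,X}(Y^n) \leq \text{TC}_n(X).
\]
Concatenating these two inequalities gives $\text{TC}_n(Y) \leq \text{TC}_n(X)$, as required.

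There is essentially no obstacle here; the statement is a direct consequence of combining the two preceding propositions, and the only thing to check is that the hypotheses of each match. For Proposition \ref{p2} we need $Y$ to be a retract of $X$, which is exactly our assumption, and for Proposition \ref{p1} we only need $Y^n$ to be a subspace of $X^n$, which is automatic from $Y \subseteq X$. Thus the proof is a two-line argument that merely cites the two previous results in sequence.
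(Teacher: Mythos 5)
Your proof is correct and follows essentially the same two-step argument as the paper: bound $\mathrm{TC}_n(Y) \leq \mathrm{TC}_{n,X}(Y^n)$ by Proposition \ref{p2} and then bound $\mathrm{TC}_{n,X}(Y^n) \leq \mathrm{TC}_n(X)$. The only cosmetic difference is that you cite Proposition \ref{p1} for the second inequality, whereas the paper obtains it from Proposition \ref{p3} with $Z = X^n$ together with the identity $\mathrm{TC}_{n,X}(X^n) = \mathrm{TC}_n(X)$; these are the same bound.
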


\begin{proof}
	Since $Y \subset X$ is a retract, $Y^{n} \subset X^{n}$. Then Proposition \ref{p3} says that TC$_{n,X}(Y^{n}) \leq$ TC$_{n,X}(X^{n}) =$ TC$_{n}(X)$. Using Proposition \ref{p2}, we conclude that TC$_{n}(Y) \leq$ TC$_{n}(X)$.
\end{proof}

\begin{proposition}
	Let $\{Y_{1}, \cdots, Y_{m}\}$ be an open covering of $X^{n}$. Then \[\text{TC}_{n,X}(Y_{1}) + \cdots + \text{TC}_{n,X}(Y_{m}) \leq m \cdot \text{TC}_{n}(X).\]
\end{proposition}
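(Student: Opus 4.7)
The plan is to prove the inequality by applying Proposition \ref{p1} termwise. Since each $Y_j$ in the open cover is in particular a subspace of $X^n$, Proposition \ref{p1} immediately gives
\[
\text{TC}_{n,X}(Y_j) \leq \text{TC}_n(X)
\]
for every $j \in \{1,\ldots,m\}$. Summing these $m$ inequalities produces
\[
\text{TC}_{n,X}(Y_1) + \cdots + \text{TC}_{n,X}(Y_m) \leq m \cdot \text{TC}_n(X),
\]
which is the desired bound.

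To unpack what is used under the hood: for each $j$, the inclusion $i_{Y_j} : Y_j \hookrightarrow X^n$ allows one to write $p_i|_{Y_j} = p_i \circ i_{Y_j}$, and then part (d) of Proposition \ref{p11} converts this into the inequality $\text{D}(p_1 \circ i_{Y_j},\ldots,p_n \circ i_{Y_j}) \leq \text{D}(p_1,\ldots,p_n)$. This is exactly the content of Proposition \ref{p1}, so no new computation is required beyond invoking it $m$ times.

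There is essentially no obstacle here: the assertion follows by a single application of a previously established inequality to each member of the cover, followed by summation. I would note, however, that the open covering hypothesis on $\{Y_1,\ldots,Y_m\}$ plays no role in this particular argument — the same bound would hold for any finite family of subspaces of $X^n$. The covering hypothesis seems to be included for motivation: it frames the result as a statement about partitioning $X^n$ into regions, each of which carries its own relative higher topological complexity, whose sum can be controlled by $m \cdot \text{TC}_n(X)$ in line with the motion-planning viewpoint.
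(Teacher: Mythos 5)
Your proof is correct and follows essentially the same route as the paper's: the paper simply re-derives Proposition \ref{p1} inline for each $Y_j$ (via the inclusion $i_{Y_j}$ and part (d) of Proposition \ref{p11}) and then sums, whereas you invoke Proposition \ref{p1} directly. Your observation that the open-covering hypothesis is never used is also accurate — the paper's own argument likewise works for any finite family of subspaces.
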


\begin{proof} Let $Y_{1}, \cdots, Y_{m}$ be an open subsets of $X^{n}$ with the fact $X^{n} = Y_{1} \cup \cdots \cup Y_{m}$. Then we have that
	\begin{eqnarray*}
		&&TC_{n,X}(Y_{1}) + \cdots + TC_{n,X}(Y_{m}) \\
		&=&D_{X^{n}}(Y_{1};p_{1},\cdots,p_{n}) + \cdots + D_{X^{n}}(Y_{m};p_{1},\cdots,p_{n}) \\
		&=&
		D(p_{1} \circ i_{Y_{1}}, \cdots, p_{n} \circ i_{Y_{1}}) + \cdots + D(p_{1} \circ i_{Y_{m}}, \cdots, p_{n} \circ i_{Y_{m}})
		\\
		&\leq&
		D(p_{1},\cdots,p_{n}) + \cdots + D(p_{1},\cdots,p_{n})
		\\
		&=&m \cdot TC_{n}(X)  
	\end{eqnarray*}
    from (d) part of Proposition \ref{p11}.
\end{proof}

\begin{theorem}\label{p4}
	Let $Y \subseteq X^{2}$. Then TC$_{2,X}(Y) \leq$ cat$_{X \times X}(Y)$.
\end{theorem}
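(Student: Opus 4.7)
The plan is to unwind both sides using the homotopic–distance viewpoint. By the subspace definition, $\text{TC}_{2,X}(Y) = \text{D}_{X^2}(Y;p_1,p_2)$ is the minimum integer $r$ for which $Y$ admits an open cover $V_1,\ldots,V_r$ such that $p_1|_{V_i} \simeq p_2|_{V_i}\colon V_i \to X$ for each $i$. On the other hand, $\text{cat}_{X\times X}(Y)$ is the minimum integer $r$ for which $Y$ admits an open cover $V_1,\ldots,V_r$ such that each inclusion $j_i\colon V_i \hookrightarrow X\times X$ is nullhomotopic. So the goal reduces to promoting a categorical cover of $Y$ in $X\times X$ to a motion–planning cover in the sense of $\text{TC}_{2,X}$.

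First I would fix an optimal categorical cover: suppose $\text{cat}_{X\times X}(Y)=r$ and let $V_1,\ldots,V_r$ be open sets covering $Y$ with each $j_i\colon V_i\hookrightarrow X\times X$ homotopic to a constant map $c_i\colon V_i\to X\times X$, say $c_i(v)\equiv(a_i,b_i)$. Composing with the projections $p_1,p_2\colon X^2\to X$ and invoking Proposition \ref{p11}(d) (or just direct functoriality of homotopy under composition), I obtain
\begin{eqnarray*}
p_1|_{V_i}=p_1\circ j_i \simeq p_1\circ c_i \equiv a_i, \qquad p_2|_{V_i}=p_2\circ j_i \simeq p_2\circ c_i \equiv b_i.
\end{eqnarray*}
Thus $p_1|_{V_i}$ and $p_2|_{V_i}$ are each nullhomotopic in $X$.

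The remaining step is to chain these two null homotopies together. Since $X$ is path-connected, choose a path $\alpha_i\colon I\to X$ with $\alpha_i(0)=a_i$ and $\alpha_i(1)=b_i$; this path supplies a homotopy between the two constant maps $V_i\to X$ at $a_i$ and at $b_i$. Concatenating with the two homotopies above yields $p_1|_{V_i}\simeq p_2|_{V_i}$ on each $V_i$. Hence $V_1,\ldots,V_r$ is a cover witnessing $\text{D}_{X^2}(Y;p_1,p_2)\le r$, and the inequality follows.

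The only subtle point, and the only place path-connectedness of $X$ is used, is the concatenation step: without it the two constant maps $a_i$ and $b_i$ need not be homotopic, so the categorical cover would not descend to a $\text{TC}_{2,X}$-cover. Everything else is routine manipulation of homotopic distance via Proposition \ref{p11}.
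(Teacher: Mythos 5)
Your proof is correct, and it reaches the inequality by a more hands-on route than the paper. The paper's proof is a one-line application of Proposition \ref{p11}(f): taking $g = i_{Y}$, $g^{'} = \ast$ (a constant map $Y \rightarrow X \times X$), $f_{1} = p_{1}$, $f_{2} = p_{2}$, and observing that $p_{1} \circ \ast \simeq p_{2} \circ \ast$ because $X$ is path-connected, it concludes $\text{D}(p_{1} \circ i_{Y}, p_{2} \circ i_{Y}) \leq \text{D}(i_{Y},\ast) = \text{cat}_{X \times X}(Y)$. You instead unwind both sides to open covers and verify directly that a categorical cover of $Y$ in $X \times X$ is already a cover witnessing $\text{D}_{X^{2}}(Y;p_{1},p_{2})$; in doing so you essentially reprove the relevant instance of Proposition \ref{p11}(f). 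The mathematical content is identical -- both arguments hinge on exactly the point you flag, namely that path-connectedness of $X$ lets you concatenate the two null homotopies through a path from $a_{i}$ to $b_{i}$ -- but your version is self-contained and makes the role of path-connectedness explicit, whereas the paper's is shorter at the cost of hiding that step inside the cited lemma. One small remark: your cover-level argument is exactly what generalizes to the corollary that follows in the paper ($\text{TC}_{n,X}(Y) \leq \text{cat}_{X^{n}}(Y)$), since for $n$ projections one concatenates $n-1$ such paths; the paper's appeal to part (f), as stated only for two maps, does not immediately give the higher case.
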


\begin{proof}
	Let $p_{1},p_{2} : X \times X \rightarrow X$ be projections and $\ast : Y \rightarrow X \times X$ a constant map. The fact $p_{1} \circ \ast \simeq p_{2} \circ \ast$ implies D$(p_{1} \circ i_{Y},p_{2} \circ i_{Y}) \leq$ D$(i_{Y},\ast)$ from (f) of Proposition \ref{p11}. Thus, we conclude that TC$_{2,X}(Y) \leq$ cat$_{X \times X}(Y)$.
\end{proof}

\quad It is clear that TC$_{2,X}(Y) =$ TC$_{X}(Y)$. The previous theorem also confirms the fact that TC$_{X}(Y) \leq$ cat$_{X \times X}(Y)$. Theorem \ref{p4} can be generalized as follows with considering Theorem \ref{t6}:

\begin{corollary}
	Let $Y \subseteq X^{n}$. Then TC$_{n,X}(Y) \leq$ cat$_{X^{n}}(Y)$.
\end{corollary}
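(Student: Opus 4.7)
The plan is to adapt the proof of Theorem \ref{p4} verbatim, replacing the pair of projections by the full $n$-tuple. First I would rewrite $\text{cat}_{X^{n}}(Y)$ in homotopic-distance form as $\text{D}(i_{Y}, \ast)$, where $i_{Y} : Y \hookrightarrow X^{n}$ is the inclusion and $\ast : Y \to X^{n}$ is the constant map at a chosen base point $(x_{0}, \ldots, x_{0}) \in X^{n}$ --- this is the same reformulation of relative LS-category that is tacitly used in the proof of Theorem \ref{p4}.

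Next I would record the key observation that makes a single base point cooperate with every projection at once: for each $i \in \{1, \ldots, n\}$, the composite $p_{i} \circ \ast : Y \to X$ is literally the constant map at $x_{0}$, so
\[
p_{1} \circ \ast = p_{2} \circ \ast = \cdots = p_{n} \circ \ast,
\]
and in particular these $n$ maps are pairwise homotopic.

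The main step is the evident $n$-map strengthening of Proposition \ref{p11}(f), which I would establish by the same open-cover argument that proves the binary case. If $\{U_{1}, \ldots, U_{r}\}$ is an open cover of $Y$ witnessing $\text{D}(i_{Y}, \ast) = r$, then on each $U_{j}$ a homotopy $H_{j} : U_{j} \times I \to X^{n}$ from $i_{Y}|_{U_{j}}$ to $\ast|_{U_{j}}$ composes with each projection to give $p_{i} \circ i_{Y}|_{U_{j}} \simeq p_{i} \circ \ast|_{U_{j}}$. Chaining these homotopies through the common constant map produces
\[
p_{1} \circ i_{Y}|_{U_{j}} \simeq p_{2} \circ i_{Y}|_{U_{j}} \simeq \cdots \simeq p_{n} \circ i_{Y}|_{U_{j}}
\]
on every $U_{j}$, so the same cover certifies $\text{D}(p_{1} \circ i_{Y}, \ldots, p_{n} \circ i_{Y}) \leq r$, yielding $\text{TC}_{n,X}(Y) \leq \text{cat}_{X^{n}}(Y)$.

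The only real obstacle is explicitly justifying this $n$-ary version of Proposition \ref{p11}(f); as described, it follows immediately by applying the binary argument coordinate-wise, so the corollary amounts to a bookkeeping generalization of Theorem \ref{p4}.
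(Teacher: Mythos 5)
Your proof is correct and follows exactly the route the paper intends: the paper states this corollary without proof, merely as the generalization of Theorem \ref{p4}, and your argument is the natural completion of that --- identifying $\mathrm{cat}_{X^{n}}(Y)$ with $\mathrm{D}(i_{Y},\ast)$ and running the $n$-ary analogue of Proposition \ref{p11}(f), which holds by the coordinate-wise chaining through the common constant map $p_{1}\circ\ast=\cdots=p_{n}\circ\ast$ just as you describe. The only point worth making explicit is that the chosen cover $\{U_{1},\ldots,U_{r}\}$ must witness the nullhomotopy of $i_{Y}$ in $X^{n}$, i.e.\ that $\mathrm{cat}_{X^{n}}(Y)=\mathrm{D}(i_{Y},\ast)$ uses path-connectedness of $X^{n}$ so the constant may be taken at a single base point; with that noted, your bookkeeping is complete.
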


\begin{theorem}
	Let $Y$, $Z \subset X^{n}$ such that $Y$ and $Z$ have the same homotopy type. Then TC$_{n,X}(Y) = $ TC$_{n,X}(Z)$.
\end{theorem}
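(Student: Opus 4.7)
The plan is to unpack the definition TC$_{n,X}(Y) = \text{D}(p_{1}\circ i_{Y}, \cdots, p_{n}\circ i_{Y})$ and then transport between $Y$ and $Z$ using the homotopy equivalence, applying parts (a) and (d) of Proposition \ref{p11} (equivalently, the homotopy invariance (e)). Write $f : Y \to Z$ and $g : Z \to Y$ for the homotopy inverses, with $g \circ f \simeq 1_{Y}$ and $f \circ g \simeq 1_{Z}$. I will interpret the equivalence of subspaces so that $i_{Y} \simeq i_{Z}\circ f$ and $i_{Z} \simeq i_{Y}\circ g$ as maps into $X^{n}$; post-composing with each projection $p_{j}$ then yields $p_{j}\circ i_{Y} \simeq p_{j}\circ i_{Z}\circ f$ and $p_{j}\circ i_{Z} \simeq p_{j}\circ i_{Y}\circ g$ for every $j \in \{1,\dots,n\}$.

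The first inequality TC$_{n,X}(Z) \leq$ TC$_{n,X}(Y)$ will come from applying Proposition \ref{p11}(d) to the map $g : Z \to Y$, giving
\begin{eqnarray*}
\text{D}(p_{1}\circ i_{Y}\circ g, \cdots, p_{n}\circ i_{Y}\circ g) &\leq& \text{D}(p_{1}\circ i_{Y}, \cdots, p_{n}\circ i_{Y}) \\
&=& \text{TC}_{n,X}(Y),
\end{eqnarray*}
and then using part (a) to replace each $p_{j}\circ i_{Y}\circ g$ by its homotopic representative $p_{j}\circ i_{Z}$, so that the left-hand side becomes $\text{D}(p_{1}\circ i_{Z}, \cdots, p_{n}\circ i_{Z}) = \text{TC}_{n,X}(Z)$. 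The reverse inequality TC$_{n,X}(Y) \leq$ TC$_{n,X}(Z)$ follows by the symmetric argument with $f : Y \to Z$ in place of $g$, combining Proposition \ref{p11}(d) with Proposition \ref{p11}(a).

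The main obstacle, and really the only delicate point, is the interpretation of ``$Y$ and $Z$ have the same homotopy type.'' The argument above needs the homotopy inverses to be compatible with the ambient inclusions into $X^{n}$, i.e.\ that $i_{Y} \simeq i_{Z}\circ f$ through a homotopy in $X^{n}$; an abstract homotopy equivalence of $Y$ and $Z$ as topological spaces is a priori weaker than this. I will therefore make this compatibility explicit (as is standard when discussing homotopy equivalent subspaces in Farber-type arguments) so that the composed homotopies are legitimate and parts (a) and (d) of Proposition \ref{p11} apply verbatim. Once this compatibility is in place, the two inequalities assembled above give the desired equality TC$_{n,X}(Y) = $ TC$_{n,X}(Z)$.
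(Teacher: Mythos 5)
Your proof is correct and follows essentially the same route as the paper's: transport the restricted projections along the homotopy equivalence and use the invariance properties of the homotopic distance (the paper packages both inequalities into a single appeal to a commutative square and homotopy invariance, while you run Proposition \ref{p11}(d) and (a) explicitly in each direction). The compatibility caveat you flag --- that the homotopy inverses must intertwine with the ambient inclusions, i.e.\ $i_{Y} \simeq i_{Z}\circ f$ as maps into $X^{n}$ --- is genuine and is precisely what the paper silently assumes when it asserts that its diagram commutes, so making it an explicit hypothesis is the right call.
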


\begin{proof}
	Let $\beta : Z \rightarrow Y$ be the homotopy equivalence map. Assume that $i \in \{1,\cdots,n\}$. Consider the following commutative diagram for the projection $p_{i} : X^{n} \rightarrow X$ with the inclusions $i_{Y} : Y \rightarrow X^{n}$ and $i_{Z} : Z \rightarrow X^{n}$:
	$$\xymatrix{
		Y \ar[r]^{p_{i} \circ i_{Y}} &
		X \ar[d]^{1_{X}} \\
		Z \ar[u]^{\beta} \ar[r]_{p_{i} \circ i_{Z}} & X.}$$
	Hence, by Theorem \ref{t1}, we get 
	\begin{eqnarray*}
		\text{D}(p_{1} \circ i_{Y},\cdots,p_{n} \circ i_{Y}) = \text{D}(p_{1} \circ i_{Z},\cdots,p_{n} \circ i_{Z})
	\end{eqnarray*}
    which concludes that TC$_{n,X}(Y) = $ TC$_{n,X}(Z)$.
\end{proof}

\section{Relative Higher Topological Complexity of a Pair}
\label{sec:3}
\begin{definition}
	\textbf{(SG)} Let $A$ be a path-connected space and $B \subseteq A$. Set
	\begin{eqnarray*}
		P^{'}_{A \times B} = \{\alpha \in A^{J_{n}} \ : \ \alpha(0) \in A, \ \alpha(1) = (\alpha_{1}(1), \cdots,\alpha_{n}(1)) \in B\} \subseteq A^{J_{n}}.
	\end{eqnarray*}
    Then for a fibration $e_{n}^{'} : P^{'}_{A \times B} \rightarrow B^{n}$ defined with $e_{n}^{'}(\alpha) = (\alpha_{1}(1), \cdots,\alpha_{n}(1))$, the relative higher topological complexity of the pair $(A,B)$ is defined as
    \begin{eqnarray*}
    	TC_{n}(A,B) = genus(e_{n}^{'}).
    \end{eqnarray*}
\end{definition}

\quad Note that $e_{n}^{'} : P^{'}_{A \times B} \rightarrow B^{n}$ is indeed a fibration because the restriction of a fibration $e_{n} : A^{J_{n}} \rightarrow A^{n}$, $e_{n}(\alpha) = (\alpha_{1}(1), \cdots,\alpha_{n}(1))$, to a subset $B^{n} \subset A^{n}$ is $e_{n}^{'}$.

\begin{proposition}\label{p5}
	\textbf{a)} TC$_{1}(A,B) = 1$. This means that the notation TC$_{n}(A,B)$ is significative for $n>1$.
     
	\textbf{b)} TC$_{n}(A,B) = $ genus$(d_{n}^{'})$, where $d_{n}^{'} : B \rightarrow B^{n}$ is a diagonal map since $e_{n}^{'}$ is a fibrational substitute of $d_{n}^{'}$.
	
	\textbf{c)} For $n = 2$, TC$_{2}(A,B) = $ TC$(A,B)$. 
\end{proposition}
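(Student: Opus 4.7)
All three parts unpack the definition of $e_n'$ and compare it with standard Schwarz-genus constructions. For (a), specialize $n = 1$: then $J_1$ is a single interval, $A^{J_1} = PA$, and $P'_{A\times B}$ reduces to the space of paths in $A$ whose endpoint lies in $B$. The assignment $b \mapsto \mathrm{const}_b$ is a continuous global section of $e_1'$, hence $\mathrm{genus}(e_1') = 1$, proving $\mathrm{TC}_1(A,B) = 1$.

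For (b), I would verify that $e_n'$ is a fibrational substitute of $d_n' : B \to B^n$. Two checks are required. First, $e_n'$ is a fibration because it is the restriction of the standard multipath fibration $e_n : A^{J_n} \to A^n$ to the preimage of $B^n \subseteq A^n$; equivalently, $e_n'$ is the pullback of $e_n$ along the inclusion $B^n \hookrightarrow A^n$. Second, the natural map $\iota : B \hookrightarrow P'_{A \times B}$ sending $b$ to the constant multipath at $b$ satisfies $e_n' \circ \iota = d_n'$ and is a homotopy equivalence, obtained by deforming a multipath toward its common start point, mimicking the standard deformation retract of $A^{J_n}$ onto $A$. Homotopy invariance of the Schwarz genus for fibrational substitutes then delivers $\mathrm{TC}_n(A,B) = \mathrm{genus}(d_n')$. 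For (c), specialize to $n = 2$ and compare $e_2'$ with Short's fibration $\pi^{A\times B}$: a section of $e_2'$ over an open $U \subseteq B^2$ produces a section of $\pi^{A\times B}$ over a corresponding open set by concatenating the two legs of each multipath into a single path, while a Short section yields a multipath section by splitting at a midpoint, matching open covers to give the equality.

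The most delicate step is the homotopy equivalence in (b): a naive deformation shrinking $\alpha$ to a constant must keep every $\alpha_i(1)$ in $B$ throughout, whereas intermediate states $\alpha_i(s)$ lie in $A$ in general. The cleanest way around this is to invoke that $e_n$ is the standard fibrational substitute of $d_n : A \to A^n$ and that this property is preserved by pullback along the cofibration $B^n \hookrightarrow A^n$, so that $e_n'$ inherits the substitute property from $e_n$ without constructing the retraction by hand; a similar naturality argument supports the section-matching in (c).
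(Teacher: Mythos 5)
Part (a) of your proposal is correct and is exactly the paper's own argument: the constant paths give a global section of $e_{1}'$, so $\mathrm{genus}(e_{1}')=1$. The trouble is in parts (b) and (c). In (b) you rightly single out the delicate step --- that the constant-multipath map $\iota\colon B\to P'_{A\times B}$ be a homotopy equivalence --- but the repair you propose does not close it. Pulling $e_{n}$ back along $B^{n}\hookrightarrow A^{n}$ produces a fibrational substitute of the \emph{homotopy} pullback of $d_{n}\colon A\to A^{n}$ along that inclusion, not of the strict pullback $d_{n}'\colon B\to B^{n}$; the two agree only when the square is homotopy cartesian, and no cofibration hypothesis supplies that. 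Concretely, take $A=\mathbb{R}^{2}$, $B=S^{1}$, $n=2$: every fibre of $e_{2}'\colon P'_{A\times B}\to S^{1}\times S^{1}$ is contractible, so $P'_{A\times B}\simeq S^{1}\times S^{1}$, the map $\iota$ is the diagonal rather than an equivalence, and $\mathrm{genus}(e_{2}')=1$ while $\mathrm{genus}(d_{2}')=\mathrm{TC}(S^{1})=2$. So the step you flagged is not merely delicate; it fails without further hypotheses on the pair $(A,B)$, and no naturality argument can recover it. (The paper's own proof of (b) simply asserts that $h(y)=\epsilon_{y}$ is a homotopy equivalence, so it carries the same gap; your suspicion was well placed, but the proposed fix is not a fix.)

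For (c) your concatenation/splitting comparison delivers only one inequality. Splitting a Short section over $V\subseteq A\times B$ at the midpoint gives a multipath whose first endpoint is $\gamma(0)\in A$, hence a section of $e_{2}'$ only over $V\cap(B\times B)$; this yields $\mathrm{genus}(e_{2}')\le\mathrm{genus}(\pi^{A\times B})$. In the other direction a section of $e_{2}'$ lives over an open subset of $B\times B$, and concatenation cannot manufacture sections over the strictly larger base $A\times B$: for $A=S^{1}$ and $B=\{b_{0}\}$ one has $\mathrm{genus}(e_{2}')=1$ (a fibration over a single point) but $\mathrm{genus}(\pi^{A\times B})=\mathrm{cat}(S^{1})=2$. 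The paper takes a different route here, replacing $e_{2}'$ by a fibration $e_{2}''\colon P_{A\times B}\to A\times B$ and invoking fibrational substitutes, but that argument again rests on the same unjustified homotopy-equivalence claim (and on matching a map into $A\times B$ with one into $B^{2}$). To make (b) and (c) provable you would need either to change the base of $e_{n}'$ from $B^{n}$ to $A\times B^{n-1}$, so that $n=2$ literally recovers Short's fibration and agrees with the paper's later homotopic-distance definition, or to impose conditions under which $B\hookrightarrow P'_{A\times B}$ is actually an equivalence.
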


\begin{proof}
	\textbf{a)} Let $n=1$. Then for a fibration $e_{1}^{'} : P^{'}_{A \times B} \rightarrow B$ with $e_{1}^{'}(\alpha) = \alpha_{1}(1)$, we construct a map $s : B \rightarrow P^{'}_{A \times B}$ such that $s$ takes any point $y$ of $B$ to the constant path $\epsilon_{y}$ at this point. Therefore, we get
	\begin{eqnarray*}
		e_{1}^{'} \circ s(y) = e_{1}^{'}(\epsilon_{y}) = y = 1_{B}(y).
	\end{eqnarray*}
	Thus, genus$(e_{1}^{'})$ equals $1$.
	
	\textbf{b)} Take a homotopy equivalence $h : B \rightarrow P^{'}_{A \times B}$ defined as $h(y) = \epsilon_{y}$, where $\epsilon_{y}$ is a constant path at $y$. Then we get
	\begin{eqnarray*}
		{e}_{n}^{'} \circ h(y) = {e}_{n}^{'}(\epsilon_{y}) = (y,\cdots,y) = d_{n}^{'}(y).
	\end{eqnarray*}
	
	\textbf{c)} Let $n = 2$. Define $e_{2}^{''} : P_{A \times B} \rightarrow A \times B$ as $e_{2}^{''}(\alpha) = (\alpha(0),\alpha(1))$. Then $e_{2}^{''}$ is a fibrational substitute of the diagonal map $d_{2}^{'} : B \rightarrow B^{2}$ because $h^{'} : B \rightarrow P_{A \times B}$, $h(y) = \epsilon_{y}$, is a homotopy equivalence and the condition $e_{2}^{''} \circ h^{'} = d_{2}^{'}$ holds with considering that $B \subseteq A$. Thus, we find TC$_{2}(A,B) =$ genus$(e_{2}^{''}) =$ TC$(A,B)$.
\end{proof}

\quad One of the well-known results of Schwarz \cite{Schwarz:1966} leads to us having an important relationship between the relative higher topological complexity and the Lusternik-Schnirelmann category:

\begin{corollary}\label{c1}
	For a path-connected space $A$ with its subset $B$, we have TC$_{n}(A,B) \leq$ cat$(B^{n})$. In addition, if $A$ is contractible, then we conclude that TC$_{n}(A,B) =$ cat$(B^{n})$.
\end{corollary}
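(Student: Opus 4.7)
The strategy is to appeal to the classical result of Schwarz \cite{Schwarz:1966}, which states that for any fibration $p : E \to Y$ the Schwarz genus satisfies genus$(p) \leq$ cat$(Y)$, and that equality holds whenever the total space $E$ is contractible. Proposition \ref{p5}(b) identifies TC$_{n}(A,B)$ with the genus of the fibration $e_{n}^{'} : P^{'}_{A \times B} \to B^{n}$, realised as a fibrational substitute of the diagonal $d_{n}^{'} : B \to B^{n}$, so both assertions of the corollary reduce to applying Schwarz's theorem to $e_{n}^{'}$.

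For the inequality TC$_{n}(A,B) \leq$ cat$(B^{n})$, the plan is to apply the upper-bound half of Schwarz's theorem directly to the fibration $e_{n}^{'}$ over $B^{n}$. No further hypotheses on $A$ or $B$ are required, and the bound is immediate once we have recorded that $e_{n}^{'}$ is indeed a fibration (as observed just after its definition, being the restriction of the fibration $e_{n} : A^{J_{n}} \to A^{n}$ over the subspace $B^{n} \subseteq A^{n}$). For the equality when $A$ is contractible, the plan is to verify that $P^{'}_{A \times B}$ is itself contractible, so that the equality case of Schwarz applies. To do so, fix a basepoint $a_{0} \in A$ and a contraction $H : A \times I \to A$ with $H(a,0) = a$ and $H(a,1) = a_{0}$; given $\alpha \in P^{'}_{A \times B}$ with common starting point $p_{0} = \alpha_{1}(0) \in A$, construct a deformation that first prepends the path $s \mapsto H(p_{0}, 1-s)$ to each component $\alpha_{i}$ (sliding the common starting point from $p_{0}$ to $a_{0}$ while leaving the endpoints in $B^{n}$ untouched), and then continues to contract the resulting multipath inside $A$ down to the constant multipath at $a_{0}$.

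The main obstacle will be controlling the second stage of this deformation: since the endpoints of any element of $P^{'}_{A \times B}$ are pinned to $B^{n}$, naively collapsing the whole multipath drags the endpoints off of $B$ and out of $P^{'}_{A \times B}$. I would therefore execute the contraction in a carefully chosen order — first performing the starting-point prepending step uniformly in $\alpha$, and then using $H$ to contract only the interior of each $\alpha_{i}$, arguing that continuity in $\alpha$ is preserved because $H$ is continuous and each reparametrisation is canonical — or, failing a direct construction, lifting a contraction of $B^{n}$ through $e_{n}^{'}$ via the homotopy lifting property to produce a contracting homotopy on the total space. Once $P^{'}_{A \times B}$ is shown to be contractible, the equality case of Schwarz's theorem immediately yields TC$_{n}(A,B) =$ genus$(e_{n}^{'}) =$ cat$(B^{n})$, completing the proof.
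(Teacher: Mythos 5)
Your argument for the inequality is correct and is exactly the route the paper intends (the paper gives no written proof, only the appeal to Schwarz): $e_{n}^{'}$ is a fibration over $B^{n}$, so genus$(e_{n}^{'}) \leq$ cat$(B^{n})$ is immediate from Schwarz's bound secat$(p)\leq$ cat of the base.

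The equality case, however, contains a genuine gap: $P^{'}_{A \times B}$ is \emph{not} contractible when $A$ is contractible (unless $B$ is as well), and the obstacle you flag --- the endpoints being pinned to $B^{n}$ --- is a real obstruction, not a technicality that a careful ordering of the contraction can circumvent. Indeed, $P^{'}_{A \times B}$ is precisely the homotopy pullback of the diagonal $A \rightarrow A^{n}$ along the inclusion $B^{n} \hookrightarrow A^{n}$: equivalently, the fibre of $e_{n}^{'}$ over $(b_{1},\cdots,b_{n})$ maps to $A$ by the common initial point, with fibre $\prod_{i} P(A;a,b_{i}) \simeq (\Omega A)^{n}$, so when $A$ is contractible every fibre of $e_{n}^{'}$ is contractible and $P^{'}_{A \times B} \simeq B^{n}$. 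Concretely, for $A = \mathbb{R}^{2}$, $B = S^{1}$, $n=2$, the space $P^{'}_{A \times B}$ has the homotopy type of the torus. Your fallback of ``lifting a contraction of $B^{n}$'' is circular, since $B^{n}$ need not be contractible. The difficulty is not merely with your write-up: since $e_{n}^{'}$ is then a fibration which is a homotopy equivalence, it admits a global section (lift the homotopy $e_{n}^{'} \circ g \simeq 1_{B^{n}}$ starting from a homotopy inverse $g$), so genus$(e_{n}^{'}) = 1$ --- consistent with Short's result quoted in Section \ref{subsec:3} that TC$(A,B)=1$ if and only if $A$ is contractible, and with Proposition \ref{p5}(c) --- whereas cat$(B^{n})$ can be arbitrarily large. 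So the contractible-total-space case of Schwarz's theorem does not apply here, and no repair of your contraction can make it apply; the asserted equality itself should be treated with suspicion.
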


\quad By Corollary \ref{c1}, we immediately have that if $B$ is contractible, then we get \[\text{TC}_{n}(A,B) \leq \text{cat}(B^{n}) = 1,\] i.e.,  TC$_{n}(A,B) = 1$. 

\begin{example}
	For any point $x_{0}$ in a path-connected space $A$, we obtain that TC$_{n}(A,\{x_{0}\}) = 1$.
\end{example}

\quad It is possible to improve Corollary \ref{c1} with using Proposition \ref{p6}:

\begin{corollary}
	Let $A$ be path-connected and $B$ be a path-connected and paracompact subset of $A$. Then we have TC$_{n}(A,B) \leq$ $n \cdot$cat$(B)$. 
\end{corollary}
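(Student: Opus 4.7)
The plan is to chain together the two results invoked just before the corollary, namely Corollary \ref{c1} and Proposition \ref{p6}. First I would apply Corollary \ref{c1} to reduce the claim to an estimate on $\mathrm{cat}(B^n)$: since $A$ is path-connected and $B\subseteq A$, the corollary gives $\mathrm{TC}_n(A,B)\le \mathrm{cat}(B^n)$. So it suffices to prove $\mathrm{cat}(B^n)\le n\cdot \mathrm{cat}(B)$.

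Second, I would prove the product estimate $\mathrm{cat}(B^n)\le n\cdot \mathrm{cat}(B)$ by induction on $n$. The base case $n=1$ is trivial. For the inductive step, assuming $\mathrm{cat}(B^{n-1})\le (n-1)\cdot \mathrm{cat}(B)$, I write $B^n=B^{n-1}\times B$ and apply Proposition \ref{p6}, which requires both factors to be paracompact and path-connected. The factor $B$ is paracompact and path-connected by hypothesis; for the factor $B^{n-1}$, path-connectedness is preserved by arbitrary products, and paracompactness is preserved by finite products of paracompact spaces (in fact a finite product of paracompact Hausdorff spaces is paracompact). Thus Proposition \ref{p6} yields
\begin{eqnarray*}
\mathrm{cat}(B^n)=\mathrm{cat}(B^{n-1}\times B)\le \mathrm{cat}(B^{n-1})+\mathrm{cat}(B)\le (n-1)\cdot \mathrm{cat}(B)+\mathrm{cat}(B)=n\cdot \mathrm{cat}(B).
\end{eqnarray*}

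Combining the two steps gives $\mathrm{TC}_n(A,B)\le \mathrm{cat}(B^n)\le n\cdot \mathrm{cat}(B)$, which is exactly the statement. There is no real obstacle here; the only subtle point is making sure the hypotheses of Proposition \ref{p6} propagate to each intermediate product $B^{k}$ during the induction, so I would state the preservation of paracompactness and path-connectedness under finite products explicitly before invoking Proposition \ref{p6} at each stage.
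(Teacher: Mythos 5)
Your proposal is correct and follows exactly the paper's route: apply Corollary \ref{c1} to get $\mathrm{TC}_n(A,B)\le \mathrm{cat}(B^n)$, then bound $\mathrm{cat}(B^n)$ by $n\cdot\mathrm{cat}(B)$ via iterated use of Proposition \ref{p6}. You merely spell out the induction and the preservation of paracompactness and path-connectedness under finite products, which the paper leaves implicit.
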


\begin{proof}
	By Corollary \ref{c1}, we obtain TC$_{n}(A,B) \leq$ cat$(B^{n})$. Since $B$ is path-connected and paracompact, we observe that cat$(B^{n}) \leq n \cdot$cat$(B)$.
\end{proof}

\quad Besides Schwarz genus, the relative higher topological complexity TC$_{n}(A,B)$ of a pair $(A,B)$ can also be defined by higher homotopic distance:

\begin{definition}
	\textbf{(HD)} Let $A$ be a path-connected space and $B \subseteq A$. Then \[\text{TC}_{n}(A,B) = \text{D}_{A^{n}}(A \times B \times B \times \cdots \times B;p_{1},p_{2},\cdots,p_{n})\] for $n>1$, where each $p_{i}$ is a projection from $A \times A \times \cdots \times A$ to $A$ onto the $i-$th factor for $i = 1, \cdots, n$. 
\end{definition}

\quad We assume that TC$_{1}(A,B)$ always equals $1$. If $B = A$, then we conclude that TC$_{n}(A,A) =$ TC$_{n}(A)$. We observe that TC$_{n}(A,B) \leq$ TC$_{n+1}(A,B)$ as well as TC$_{n}$ of a space or a fibration.

\begin{proposition}
	Let $B_{1} \subset B_{2} \subset A$. Then TC$_{n}(A,B_{1}) \leq$ TC$_{n}(A,B_{2})$.
\end{proposition}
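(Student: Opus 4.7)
The plan is to reduce this inequality to the monotonicity-under-inclusion argument already used in Proposition \ref{p3}, applied to the pair of nested subsets of $A^{n}$ produced by the hypothesis $B_{1}\subset B_{2}\subset A$. Since the relative topological complexity of a pair is defined via the relative higher homotopic distance on the subspace $A\times B\times\cdots\times B$ of $A^{n}$, the first step is simply to rewrite both sides of the desired inequality in that form.

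Concretely, I would set $Y_{1}=A\times B_{1}\times\cdots\times B_{1}$ and $Y_{2}=A\times B_{2}\times\cdots\times B_{2}$, observe that $B_{1}\subset B_{2}$ forces $Y_{1}\subset Y_{2}\subset A^{n}$, and let $i_{1}\colon Y_{1}\hookrightarrow A^{n}$, $i_{2}\colon Y_{2}\hookrightarrow A^{n}$, and $j\colon Y_{1}\hookrightarrow Y_{2}$ be the three resulting inclusions, which clearly satisfy $i_{1}=i_{2}\circ j$. By the definition of the relative higher homotopic distance we then have
\begin{eqnarray*}
\text{TC}_{n}(A,B_{1}) &=& \text{D}_{A^{n}}(Y_{1};p_{1},\ldots,p_{n}) \\
&=& \text{D}(p_{1}\circ i_{2}\circ j,\ldots,p_{n}\circ i_{2}\circ j).
\end{eqnarray*}

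The key step is then to apply part (d) of Proposition \ref{p11} with $j$ playing the role of the common map precomposed on the right: taking $p_{k}\circ i_{2}$ as the maps $f_{k}\colon Y_{2}\to A$ and $j$ as each $p_{k}$ in the statement of (d) (all equal, hence trivially homotopic), we conclude
\begin{eqnarray*}
\text{D}(p_{1}\circ i_{2}\circ j,\ldots,p_{n}\circ i_{2}\circ j) \leq \text{D}(p_{1}\circ i_{2},\ldots,p_{n}\circ i_{2}) = \text{TC}_{n}(A,B_{2}),
\end{eqnarray*}
which gives the required inequality. No obstacle is expected beyond verifying that the subset inclusions behave correctly coordinatewise (the first factor is always $A$ in both $Y_{1}$ and $Y_{2}$, so the nesting only needs to be checked in the last $n-1$ factors), and that Proposition \ref{p11}(d) indeed applies with a common map on the right, which it does since the statement only requires consecutive homotopies among the right-hand maps.
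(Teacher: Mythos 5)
Your proof is correct and follows essentially the same route as the paper, which simply defers to the inclusion-factorization argument of Proposition \ref{p3}. In fact your version is slightly more careful than the paper's sketch: you correctly identify the relevant nested subspaces as the products $A\times B_{i}\times\cdots\times B_{i}$ inside $A^{n}$ (rather than $B_{1}\subset B_{2}\subset A$ themselves) before applying Proposition \ref{p11}(d).
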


\begin{proof}
	When we consider three inclusion maps $i_{B_{1}} : B_{1} \rightarrow A$, $i_{B_{2}} : B_{2} \rightarrow A$, $i : B_{1} \rightarrow B_{2}$ such that $i_{B_{1}} = i_{B_{2}} \circ i$, the remaining part of the proof goes similar to the proof of Proposition \ref{p3}.
\end{proof}

\begin{proposition}
	If $A$ is path-connected with a subset $B \subset A$, then we have TC$_{n}(A,B) \leq$ TC$_{n}(A)$.
\end{proposition}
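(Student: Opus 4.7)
The plan is to adapt the proof of Proposition \ref{p1} verbatim, specialising the generic subspace $Y \subseteq X^{n}$ there to the particular subspace $Y = A \times B^{n-1} \subseteq A^{n}$ that appears in the definition of $\text{TC}_{n}(A,B)$. In other words, the relative higher topological complexity of the pair $(A,B)$ is literally a special case of the relative higher topological complexity $\text{TC}_{n,A}(Y)$ of the space $A$ with respect to the subspace $Y$, so the same argument should transport without modification.

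More concretely, I would first unwind the definition
\[\text{TC}_{n}(A,B) = \text{D}_{A^{n}}(A \times B \times \cdots \times B;\,p_{1},\ldots,p_{n}) = \text{D}\bigl(p_{1}|_{Y},\ldots,p_{n}|_{Y}\bigr),\]
where $Y = A \times B^{n-1}$ and each $p_{i} : A^{n} \to A$ is the $i$-th projection. Then I would introduce the inclusion map $i_{Y} : Y \hookrightarrow A^{n}$ and rewrite each restricted projection as the composition $p_{i}|_{Y} = p_{i} \circ i_{Y}$.

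The key step is to invoke part (d) of Proposition \ref{p11} with all the ``inner'' maps taken to be the same inclusion $i_{Y}$ (so the homotopy hypothesis $p_{j-1} \simeq p_{j}$ is trivially satisfied by equality). This yields
\[\text{D}(p_{1} \circ i_{Y},\ldots,p_{n} \circ i_{Y}) \leq \text{D}(p_{1},\ldots,p_{n}),\]
and recognising the right-hand side as $\text{TC}_{n}(A)$ finishes the argument. Path-connectedness of $A$ is only needed to ensure that $\text{TC}_{n}(A)$ is defined in the first place; it plays no direct role in the inequality.

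There is essentially no obstacle to overcome: the statement is a direct specialisation of Proposition \ref{p1}, and the monotonicity encoded in Proposition \ref{p11}(d) does all the work. If one wanted to emphasise novelty, one could alternatively chain the result through Proposition \ref{p3} by noting $A \times B^{n-1} \subseteq A^{n}$ and then using $\text{TC}_{n,A}(A^{n}) = \text{TC}_{n}(A)$, but the one-line application of Proposition \ref{p11}(d) is the cleanest route.
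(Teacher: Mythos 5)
Your proposal is correct and follows essentially the same route as the paper: both introduce the inclusion $i_{Y} : A \times B \times \cdots \times B \hookrightarrow A^{n}$, rewrite each restricted projection as $p_{i} \circ i_{Y}$, and apply Proposition \ref{p11}(d) to bound the distance by $\text{D}(p_{1},\ldots,p_{n}) = \text{TC}_{n}(A)$. The observation that this is a special case of Proposition \ref{p1} is a fair added remark, but the core argument is identical to the paper's.
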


\begin{proof}
	Let $i : A \times B \times B \times \cdots \times B \rightarrow A^{n}$ be an inclusion map. Then Proposition \ref{p11} (d) gives us that 
	\begin{eqnarray*}
		\text{TC}_{n}(A,B) &=& \text{D}_{A^{n}}(A \times B \times B \times \cdots \times B;p_{1},p_{2},\cdots,p_{n}) \\
		&=& \text{D}(p_{1} \circ i,p_{2} \circ i,\cdots,p_{n} \circ i) \\
		&\leq& \text{D}(p_{1},p_{2},\cdots,p_{n}) \\
		&=& \text{TC}_{n}(A).
	\end{eqnarray*}
\end{proof}

\section{Parametrised (Higher) Topological Complexity Using Homotopic Distance}
\label{sec:4}

\quad The task in this section is to mention the homotopic distance definition of the parametrised topological complexity. Let $q : E \rightarrow B$ be a fibration and $X \neq \emptyset$ is a path-connected fiber for the fibration $q$. Let $E_{B}^{I}$ be a set consisting of all continuous paths $\gamma$ in $E$ such that $q \circ \gamma$ is a constant path. Moreover, $E \times_{B} E$ is a subset of $E \times E$ and it contains all points $(e,e^{'})$ with the condition $q(e)$ equals $q(e^{'})$. Recall that the map $\pi : E_{B}^{I} \rightarrow E \times_{B} E$ with $\pi(\gamma) = (\gamma(0),\gamma(1))$ is a fibration with fibre $\Omega X$. Then consider the following diagram (see also Theorem \ref{t2}):
\[\xymatrix{
	P \ar[r]^{\pi_{2}} \ar[d]_{\pi_{1}} &
	E_{B}^{I} \ar[d]^{\pi} \\
	E \times_{B} E \ar[r]_{(p_{1}^{B},p_{2}^{B})} & E \times_{B} E}.\]
Here $p_{i}^{B} : E \times_{B} E \rightarrow E$ is a projection map onto the $i$th factor for $i = 1, 2$. Similarly, $\pi_{i}$ is another projection map for each $i = 1, 2$. Also, $P$ is given by the set $\{(e,e^{'},\gamma) \ : \ \gamma(0) = p_{1}(e,e^{'}) = e, \ \gamma(1) = p_{2}(e,e^{'}) = e^{'}\}$, and it is clearly a subset of $E\times_{B} E \times E_{B}^{I}$. This yields that $D(p_{1}^{B},p_{2}^{B}) = genus(\pi_{1})$. Since $(p_{1}^{B},p_{2}^{B}) = 1_{E \times_{B} E}$, we observe that $genus(\pi_{1}) = genus(\pi)$. Combining this result with the Definition \ref{d2}, we have a new statement of the parametrised topological complexity on homotopic distance:
\begin{definition}\label{d1}
	\textbf{(HD)} The parametrised topological complexity is defined as
	\begin{eqnarray*}
		\text{TC}[q : E \rightarrow B] = D(p_{1}^{B},p_{2}^{B})
	\end{eqnarray*}
    for the projection map $p_{i}^{B} : E \times_{B} E \rightarrow E$ with each $i = 1, 2$.
\end{definition}

\quad For the improved version of Definition \ref{d1}, we note that the parametrised higher topological complexity is defined as 
\begin{eqnarray*}
	\text{TC}_{n}[q : E \rightarrow B] = D(p_{1}^{B}, \cdots,p_{n}^{B})
\end{eqnarray*}
for $n>1$ and the map $p_{i}^{B} : E \times_{B} E \cdots \times_{B} E \rightarrow E$ with each $i = 1, \cdots, n$.

\quad If $n = 1$, then TC$_{1}[q : E \rightarrow B]$ is always $1$. The second observation states that the inequality TC$_{n}[q : E \rightarrow B] \leq$ TC$_{n+1}[q : E \rightarrow B]$ holds. Furthermore, one can easily observe the equality TC$_{2}[q : E \rightarrow B] =$ TC$[q : E \rightarrow B]$ by using the higher homotopic distance.

\begin{proposition}\label{p10}
	Let $q : E \rightarrow B$ be a fibration. Let $q|_{B^{'}} = q^{'} : E^{'} \rightarrow B^{'}$ be another fibration with $B^{'} \subset B$ and $E^{'} = q^{-1}(B^{'})$. Then 
	\begin{eqnarray*}
		\text{TC}[q^{'} : E^{'} \rightarrow B^{'}] \leq \text{TC}[q : E \rightarrow B].
	\end{eqnarray*} 
\end{proposition}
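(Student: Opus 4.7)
The plan is to prove this directly from the Schwarz genus formulation of Definition~\ref{d2}, since the inclusion $E' \subset E$ interacts most cleanly with the local-section viewpoint; this is simpler than trying to chase the homotopic-distance formulation of Definition~\ref{d1} through Proposition~\ref{p11}, where the relevant compositions go in the wrong direction. Set $r = \text{TC}[q : E \rightarrow B]$ and fix an open cover $\{U_1, \ldots, U_r\}$ of $E \times_B E$ together with continuous sections $s_i : U_i \rightarrow E^{I}_{B}$ satisfying $\pi_B \circ s_i = 1_{U_i}$.

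First, I would record the inclusion $E' \times_{B'} E' \subset E \times_B E$: if $(e_1,e_2) \in E' \times E'$ has $q'(e_1) = q'(e_2)$, then automatically $q(e_1) = q(e_2)$ since $q' = q|_{E'}$. Hence each $V_i := U_i \cap (E' \times_{B'} E')$ is open in $E' \times_{B'} E'$ in the subspace topology, and $\{V_1,\ldots,V_r\}$ is an open cover of $E' \times_{B'} E'$ of cardinality at most $r$.

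Second, I would check that each restriction $s_i|_{V_i}$ actually lands in $(E')^{I}_{B'}$. If $(e_1,e_2) \in V_i$ then $b := q(e_1) = q(e_2)$ lies in $B'$, and the path $\gamma = s_i(e_1,e_2) \in E^{I}_{B}$ has $q \circ \gamma$ constantly equal to $b \in B'$; because $E' = q^{-1}(B')$, this forces $\gamma(t) \in E'$ for every $t \in I$, so $\gamma \in (E')^{I}_{B'}$. Consequently $s_i|_{V_i}$ is a continuous section of $\pi_{B'} : (E')^{I}_{B'} \rightarrow E' \times_{B'} E'$ over $V_i$, and the cover $\{V_1,\ldots,V_r\}$ witnesses $\text{TC}[q' : E' \rightarrow B'] = \text{genus}(\pi_{B'}) \leq r$.

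There is no substantial obstacle here; the crux is the elementary observation that a curve in $E$ whose $q$-image is constantly inside $B'$ must be entirely contained in $q^{-1}(B') = E'$. The only point requiring any care is the purely topological check that $V_i$ is open in $E' \times_{B'} E'$, which is immediate from the subspace topology and the openness of $U_i$.
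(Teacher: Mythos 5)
Your proof is correct, but it follows a genuinely different route from the paper's. You work entirely in the Schwarz genus picture of Definition~\ref{d2}: restrict a covering-by-sections of $\pi_B$ over $E \times_B E$ to $E' \times_{B'} E'$, and verify the key point that a path $\gamma$ with $q \circ \gamma$ constant at a point of $B'$ lies entirely in $q^{-1}(B') = E'$, so the restricted sections land in $(E')^{I}_{B'}$. The paper instead stays in the homotopic-distance picture of Definition~\ref{d1}: it writes $p_i^{B'} = p_i^{B} \circ j$ for the inclusion $j : E' \times_{B'} E' \hookrightarrow E \times_B E$ and applies Proposition~\ref{p11}(d) to get $\mathrm{D}(p_1^{B}\circ j, p_2^{B}\circ j) \leq \mathrm{D}(p_1^{B}, p_2^{B})$ in essentially one line. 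Note that your stated reason for avoiding that route --- that ``the relevant compositions go in the wrong direction'' --- is not accurate: the identity $p_i^{B'} = p_i^{B} \circ j$ is precisely a precomposition with a fixed map, which is exactly the situation Proposition~\ref{p11}(d) handles, and this is the shorter argument. What your approach buys in exchange is self-containedness and explicitness: it exhibits the motion planners for $q'$ concretely as restrictions of those for $q$, and it does not rely on the equivalence between the genus and distance formulations of the parametrised topological complexity, which the paper only establishes informally just before Definition~\ref{d1}.
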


\begin{remark}
	Proposition \ref{p10} is first expressed in \cite{CohenFarberWein:2021}. We now give explicit proof by using the homotopic distance.
\end{remark}

\begin{proof}
	By Definition \ref{d1}, TC$[q : E \rightarrow B]$ and TC$[q^{'} : E^{'} \rightarrow B^{'}]$ are respectively equal to D$(p_{1}^{B},p_{2}^{B})$ and D$(p_{1}^{B^{'}},p_{2}^{B^{'}})$ for the projection maps $p_{i}^{B} : E \times_{B} E \rightarrow E$ and $p_{i}^{B^{'}} : E^{'} \times_{B^{'}} E^{'} \rightarrow E^{'}$ for each $i = 1, 2$. The projection map $p_{i}^{B^{'}}$ can be thought as $p_{i}^{B} \circ j$, where $j : E^{'} \times_{B^{'}} E^{'} \hookrightarrow E \times_{B} E$ is an inclusion. Therefore, we have
	\begin{eqnarray*}
		D(p_{1}^{B^{'}},p_{2}^{B^{'}}) = D(p_{1}^{B} \circ j,p_{2}^{B} \circ j)
	\end{eqnarray*}
    via (d) of Proposition \ref{p11}. Finally, the inequality D$(p_{1}^{B} \circ j,p_{2}^{B} \circ j) \leq$ D$(p_{1}^{B},p_{2}^{B})$ concludes that TC$[q^{'} : E^{'} \rightarrow B^{'}] \leq$ TC$[q : E \rightarrow B]$.
\end{proof}

\quad It is possible that the parametrised topological complexity can be defined by the relative topological complexity. Indeed,
\begin{eqnarray*}
	\text{TC}[q : E \rightarrow B] &=& \text{D}(p_{1}^{B},p_{2}^{B}) \\
	&=& \text{D}(p_{1} \circ i_{E \times_{B} E},p_{2} \circ i_{E \times_{B} E})\\
	&=& \text{D}_{E^{2}}(E \times_{B} E;p_{1},p_{2})\\
	&=& \text{TC}_{E}(E \times_{B} E).
\end{eqnarray*}

This fact is also improved with the following equality: \[\text{TC}_{n}[q : E \rightarrow B] = \text{TC}_{n,E}(E \times_{B} E \cdots \times_{B} E).\]

\begin{proposition}
	TC$_{n}[q : E \rightarrow B] \leq$ TC$_{n}(E)$.
\end{proposition}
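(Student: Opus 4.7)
The plan is to exploit the equality
\[
\text{TC}_{n}[q : E \rightarrow B] = \text{TC}_{n,E}(E \times_{B} E \times_{B} \cdots \times_{B} E)
\]
that was recorded immediately before the statement, together with Proposition \ref{p1}, which says precisely that $\text{TC}_{n,X}(Y) \leq \text{TC}_{n}(X)$ for any subspace $Y \subseteq X^{n}$. The only thing to check before invoking Proposition \ref{p1} is that the $n$-fold fibre product $E \times_{B} E \times_{B} \cdots \times_{B} E$ is genuinely a subspace of $E^{n}$, which is immediate from its definition as the preimage of the diagonal of $B^{n}$ under $q \times \cdots \times q$.

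Alternatively, and perhaps more transparently for the reader, I would give the direct one-line argument that parallels the proof of Proposition \ref{p1}. Let $j : E \times_{B} E \times_{B} \cdots \times_{B} E \hookrightarrow E^{n}$ be the inclusion, and let $p_{i} : E^{n} \rightarrow E$ denote the standard projection onto the $i$-th factor. Then by construction the parametrised projection $p_{i}^{B}$ is exactly the restriction $p_{i} \circ j$. Applying part (d) of Proposition \ref{p11} to the family $\{p_{1}, \ldots, p_{n}\}$ precomposed with $j$ yields
\begin{eqnarray*}
	\text{TC}_{n}[q : E \rightarrow B] &=& \text{D}(p_{1}^{B}, \cdots, p_{n}^{B}) \\
	&=& \text{D}(p_{1} \circ j, \cdots, p_{n} \circ j) \\
	&\leq& \text{D}(p_{1}, \cdots, p_{n}) \\
	&=& \text{TC}_{n}(E),
\end{eqnarray*}
as desired.

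There is no real obstacle here, which is perhaps why the authors leave it as a short corollary: the content is entirely in the inclusion $E \times_{B} \cdots \times_{B} E \subseteq E^{n}$ and in the monotonicity of the higher homotopic distance under precomposition. The only bookkeeping point worth double-checking is that the identification $p_{i}^{B} = p_{i} \circ j$ really does hold on the nose (not merely up to homotopy), but this is automatic from the definition of the fibre product as a subset of $E^{n}$ and the definition of $p_{i}^{B}$ as the restricted projection.
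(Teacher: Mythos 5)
Your second, direct argument is exactly the proof given in the paper: identify $p_{i}^{B} = p_{i} \circ j$ for the inclusion $j : E \times_{B} \cdots \times_{B} E \hookrightarrow E^{n}$ and apply Proposition \ref{p11}(d). Your first route via Proposition \ref{p1} is also fine, but it unwinds to the same computation, so the proposal matches the paper's approach.
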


\begin{proof}
     Let $j : E \times_{B} E \cdots \times_{B} E \rightarrow E \times E \cdots \times E$ be the inclusion map. Then, by Proposition \ref{p11} (d), we have
     \begin{eqnarray*}
     	\text{TC}_{n}[q : E \rightarrow B] &=& \text{D}(p_{1}^{B},\cdots,p_{n}^{B}) \\
     	&=& \text{D}(p_{1} \circ j,\cdots,p_{n} \circ j) \\
     	&\leq& \text{D}(p_{1},\cdots,p_{n}) = \text{TC}_{n}(E).
     \end{eqnarray*}
\end{proof}

\begin{corollary}\label{c3}
	TC$[q : E \rightarrow B] \leq$ TC$(E)$.
\end{corollary}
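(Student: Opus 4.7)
The plan is to obtain this as an immediate specialization of the preceding Proposition (the one giving $\text{TC}_{n}[q : E \rightarrow B] \leq \text{TC}_{n}(E)$) to the case $n = 2$. The paper has already observed two equalities that make this specialization instant: first, $\text{TC}_{2}[q : E \rightarrow B] = \text{TC}[q : E \rightarrow B]$, which was noted right after the definition of the parametrised higher topological complexity via homotopic distance; and second, $\text{TC}_{2}(E) = \text{TC}(E)$, which is the standard coincidence between the degree-2 higher topological complexity and ordinary topological complexity.

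So first I would set $n = 2$ in the preceding Proposition to get
\begin{eqnarray*}
    \text{TC}_{2}[q : E \rightarrow B] \leq \text{TC}_{2}(E).
\end{eqnarray*}
Then I would rewrite the left-hand side as $\text{TC}[q : E \rightarrow B]$ and the right-hand side as $\text{TC}(E)$ using the two equalities mentioned above, yielding the claim.

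Alternatively, one could give a direct self-contained proof that mirrors the proof of the preceding Proposition: use Definition \ref{d1} to write $\text{TC}[q : E \rightarrow B] = \text{D}(p_{1}^{B}, p_{2}^{B})$, factor each $p_{i}^{B}$ through the inclusion $j : E \times_{B} E \hookrightarrow E \times E$ as $p_{i}^{B} = p_{i} \circ j$, and then apply part (d) of Proposition \ref{p11} to conclude $\text{D}(p_{1} \circ j, p_{2} \circ j) \leq \text{D}(p_{1}, p_{2}) = \text{TC}(E)$. There is no real obstacle here; the only thing to check is that the $n=2$ instance of the preceding proposition is cited cleanly, or equivalently that the inclusion $j$ is well-defined, which follows from the definition of the fibred product. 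I would favor the one-line corollary proof, since the preceding proposition has already done all the work.
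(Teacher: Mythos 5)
Your proposal is correct and matches the paper's (implicit) argument: the corollary is stated without proof precisely because it is the $n=2$ specialization of the preceding proposition, combined with the equalities TC$_{2}[q : E \rightarrow B] =$ TC$[q : E \rightarrow B]$ and TC$_{2}(E) =$ TC$(E)$ already noted in the text. Your alternative direct argument via the inclusion $j : E \times_{B} E \hookrightarrow E \times E$ and Proposition \ref{p11} (d) is also exactly the proof the paper gives for the general proposition, so nothing is missing.
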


\begin{theorem}
	TC$_{n}(q) \leq$ TC$_{n}[q : E \rightarrow B]$.
\end{theorem}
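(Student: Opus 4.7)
The plan is to express both sides via the higher homotopic distance formulations and to exploit the canonical inclusion of the $n$-fold fibre product into $E^{n}$. Recall that
\[
\text{TC}_{n}[q : E \to B] \;=\; D(p_{1}^{B}, \ldots, p_{n}^{B}), \qquad \text{TC}_{n}(q) \;=\; D(q \circ p_{1}, \ldots, q \circ p_{n}),
\]
where $p_{i}^{B} : E \times_{B} E \times_{B} \cdots \times_{B} E \to E$ and $p_{i} : E^{n} \to E$ are the respective coordinate projections. Let $j : E \times_{B} \cdots \times_{B} E \hookrightarrow E^{n}$ denote the canonical inclusion; by construction $p_{i}^{B} = p_{i} \circ j$ for every $i \in \{1, \ldots, n\}$.

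Starting from an open cover $\{U_{\alpha}\}$ of the fibre product that realises $\text{TC}_{n}[q : E \to B]$ (so that $p_{1}^{B}|_{U_{\alpha}} \simeq \cdots \simeq p_{n}^{B}|_{U_{\alpha}}$ for each $\alpha$), my idea is to pass to homotopies among $q \circ p_{i}^{B}|_{U_{\alpha}}$ by post-composition with $q$; note that these composed maps in fact coincide on the fibre product, since $q(e_{1}) = \cdots = q(e_{n})$ there, so part (c) of Proposition~\ref{p11} applies trivially. I would then thicken each $U_{\alpha} \subset E \times_{B} \cdots \times_{B} E$ to an open $V_{\alpha} \subset E^{n}$ on which $q \circ p_{1}|_{V_{\alpha}} \simeq \cdots \simeq q \circ p_{n}|_{V_{\alpha}}$ holds, using the homotopy lifting property of the fibration $q$ together with the identification $E \times_{B} \cdots \times_{B} E = (q^{n})^{-1}(\Delta_{B})$ as the preimage of the diagonal $\Delta_{B} \subset B^{n}$ under $q^{n} : E^{n} \to B^{n}$. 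Once the resulting $\{V_{\alpha}\}$ cover $E^{n}$, they witness $\text{TC}_{n}(q)$ and yield the inequality $\text{TC}_{n}(q) \leq \text{TC}_{n}[q : E \to B]$.

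The principal obstacle is the thickening/extension step, which is nontrivial precisely because the domains of the two HDs differ and because the homotopies produced on the fibre product must be genuinely extended (not merely restricted) over open neighbourhoods in $E^{n}$. An alternative, perhaps cleaner route is via Theorem~\ref{t2}: each of the two higher homotopic distances may be rewritten as the Schwarz genus of an explicit pullback fibration, and exhibiting a natural map of such pullback fibrations — one covering the composition of $q^{n} : E^{n} \to B^{n}$ with the relevant diagonal — would then yield the desired inequality on Schwarz genera immediately, echoing the style of proof already used earlier in Section~\ref{sec:4}.
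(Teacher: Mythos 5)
Your proposal correctly locates where the real difficulty lies, but it does not overcome it, and in fact it cannot: the step you flag as the ``principal obstacle'' --- thickening an open cover $\{U_{\alpha}\}$ of the fibre product $E\times_{B}\cdots\times_{B}E$ to an open cover $\{V_{\alpha}\}$ of all of $E^{n}$ on which $q\circ p_{1}\simeq\cdots\simeq q\circ p_{n}$ --- is not merely nontrivial, it fails in general. The sets $V_{\alpha}$ would have to cover the whole of $E^{n}$, while the data you start from lives only on the (typically very thin) subspace $(q^{n})^{-1}(\Delta_{B})$; no homotopy lifting argument manufactures homotopies over points of $E^{n}$ far from the fibre product. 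A concrete counterexample to the statement itself: take $q=\mathrm{id}_{B}:B\rightarrow B$ for a non-contractible $B$ and $n=2$. Then $E\times_{B}E$ is the diagonal and $p_{1}^{B}=p_{2}^{B}$, so $\mathrm{TC}[q:E\rightarrow B]=1$, whereas $\mathrm{TC}(q)=\mathrm{D}(q\circ p_{1},q\circ p_{2})=\mathrm{D}(p_{1},p_{2})=\mathrm{TC}(B)>1$ (consistently with Proposition \ref{p9}, which gives $\mathrm{cat}(B)\leq\mathrm{TC}(q)$). So the inequality $\mathrm{TC}_{n}(q)\leq\mathrm{TC}_{n}[q:E\rightarrow B]$ is false as stated, and no completion of your plan (nor of your alternative route through Theorem \ref{t2}) can succeed.

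For comparison, the paper's own argument is the chain $\mathrm{D}(q\circ p_{1},\cdots,q\circ p_{n})\leq\mathrm{D}(p_{1},\cdots,p_{n})\leq\mathrm{D}(p_{1}\circ j,\cdots,p_{n}\circ j)$, with the second inequality attributed to part (d) of Proposition \ref{p11}. But (d) gives exactly the opposite direction, $\mathrm{D}(p_{1}\circ j,\cdots,p_{n}\circ j)\leq\mathrm{D}(p_{1},\cdots,p_{n})$; indeed the paper applies (d) correctly two results earlier to prove $\mathrm{TC}_{n}[q:E\rightarrow B]\leq\mathrm{TC}_{n}(E)$, which directly contradicts the middle step here. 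So the paper's proof is also invalid, and for essentially the reason your write-up implicitly detected: the comparison between the homotopic distance computed on $E^{n}$ and the one computed on the fibre product goes the wrong way for the claimed conclusion.
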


\begin{proof}
	Let $j : E \times_{B} E \cdots \times_{B} E \rightarrow E \times E \cdots \times E$ be an inclusion map and \linebreak $p_{i} : E^{n} \rightarrow E$ be the projection map for each $i = 1, \cdots, n$. Then we find
	\begin{eqnarray*}
		\text{TC}_{n}(q) &=& \text{D}(q \circ p_{1},\cdots,q \circ p_{n}) \\
		&\leq& \text{D}(p_{1},\cdots,p_{n})\\
		&\leq& \text{D}(p_{1} \circ j,\cdots,p_{n} \circ j)\\
		&=& \text{D}(p_{1}^{B},\cdots,p_{n}^{B}) = \text{TC}_{n}[q : E \rightarrow B]
	\end{eqnarray*}
    by using (c) and (d) parts of Proposition \ref{p11}, respectively.
\end{proof}

\begin{corollary}\label{c2}
	TC$(q) \leq$ TC$[q : E \rightarrow B]$.
\end{corollary}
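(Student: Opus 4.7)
The plan is to obtain Corollary \ref{c2} as the immediate specialization $n=2$ of the preceding Theorem, which asserts $\text{TC}_{n}(q) \leq \text{TC}_{n}[q : E \rightarrow B]$ for all $n$. The only facts I need to invoke are the two base-case identifications: $\text{TC}_{2}(q) = \text{TC}(q)$ and $\text{TC}_{2}[q : E \rightarrow B] = \text{TC}[q : E \rightarrow B]$. The first identification is built into the definition of the higher topological complexity of a fibration (for $n=2$ the map $\text{D}(q \circ p_{1}, q \circ p_{2})$ collapses to $\text{D}(q \circ p_{1}, q \circ p_{2})$, which is $\text{TC}(q)$), and the second was noted in the text just after Definition \ref{d1}.

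Concretely, first I would set $n=2$ in the preceding theorem to obtain $\text{TC}_{2}(q) \leq \text{TC}_{2}[q : E \rightarrow B]$. Then I would rewrite the left-hand side as $\text{TC}(q)$ using $\text{TC}_{2}(q) = \text{D}(q \circ p_{1}, q \circ p_{2}) = \text{TC}(q)$, and the right-hand side as $\text{TC}[q : E \rightarrow B]$ using $\text{TC}_{2}[q : E \rightarrow B] = \text{D}(p_{1}^{B}, p_{2}^{B}) = \text{TC}[q : E \rightarrow B]$ from Definition \ref{d1}. Chaining these gives the required inequality.

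There is no genuine obstacle here; this corollary is truly a one-line consequence, and the only thing to be careful about is making sure the reader sees the two specialization identifications. If one preferred a standalone proof that does not reference the higher theorem, one could instead repeat its argument in the $n=2$ case: let $j : E \times_{B} E \hookrightarrow E \times E$ be the inclusion, so that $p_{i}^{B} = p_{i} \circ j$ for $i=1,2$, and then apply parts (c) and (d) of Proposition \ref{p11} to get
\begin{eqnarray*}
\text{TC}(q) = \text{D}(q \circ p_{1}, q \circ p_{2}) \leq \text{D}(p_{1}, p_{2}) \leq \text{D}(p_{1} \circ j, p_{2} \circ j) = \text{D}(p_{1}^{B}, p_{2}^{B}) = \text{TC}[q : E \rightarrow B].
\end{eqnarray*}
Either presentation suffices; I would opt for the one-line specialization since the general statement is already proved directly above.
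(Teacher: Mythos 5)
Your primary argument -- specializing the preceding theorem to $n=2$ and invoking the identifications $\text{TC}_{2}(q) = \text{TC}(q)$ and $\text{TC}_{2}[q : E \rightarrow B] = \text{TC}[q : E \rightarrow B]$ -- is exactly how the paper obtains Corollary \ref{c2}, which it states without a separate proof as an immediate consequence of that theorem. Your standalone alternative simply reproduces the theorem's own proof in the case $n=2$ (including its step $\text{D}(p_{1},p_{2}) \leq \text{D}(p_{1}\circ j, p_{2}\circ j)$, which is stated in the opposite direction to what Proposition \ref{p11}(d) literally provides), so it adds nothing beyond what the paper already contains.
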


\begin{corollary}\label{c4}
	\textbf{a)} Let $q_{1} : E_{1} \rightarrow B_{1}$ be a fibration with a path-connected fiber $X_{1}$ and $q_{2} : E_{2} \rightarrow B_{2}$ be another fibration with a path-connected fiber $X_{2}$. If $E_{1}$, $B_{1}$, $E_{2}$ and $B_{2}$ are metrisable, then 
	\begin{eqnarray*}
		\max\{TC(q_{1}),TC(q_{2})\} \leq TC[q_{1} : E_{1} \rightarrow B_{1}] + TC[q_{2} : E_{2} \rightarrow B_{2}].
	\end{eqnarray*}
    \textbf{b)} cat$(B) \leq$ TC$[q : E \rightarrow B]$.
\end{corollary}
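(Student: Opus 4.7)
Both inequalities in Corollary \ref{c4} follow by chaining already-established results in the paper; no new homotopy-theoretic input seems to be required.

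For part \textbf{(a)}, I would pass through the product fibration $q_{1} \times q_{2} : E_{1} \times E_{2} \rightarrow B_{1} \times B_{2}$ as a bridge between the ordinary topological complexity of a fibration and its parametrised counterpart. First, Proposition \ref{p7} applied to $q_{1}$ and $q_{2}$ yields $\max\{\text{TC}(q_{1}),\text{TC}(q_{2})\} \leq \text{TC}(q_{1} \times q_{2})$. Next, Corollary \ref{c2} applied to the product fibration gives $\text{TC}(q_{1} \times q_{2}) \leq \text{TC}[q_{1} \times q_{2} : E_{1} \times E_{2} \rightarrow B_{1} \times B_{2}]$. Since all of $E_{1}$, $E_{2}$, $B_{1}$, $B_{2}$ are assumed metrisable, Proposition \ref{p8} is applicable and bounds this last quantity by $\text{TC}[q_{1} : E_{1} \rightarrow B_{1}] + \text{TC}[q_{2} : E_{2} \rightarrow B_{2}]$. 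Concatenating the three inequalities produces the claim. Part \textbf{(b)} is even shorter: Proposition \ref{p9} delivers $\text{cat}(B) \leq \text{TC}(q)$, and Corollary \ref{c2} delivers $\text{TC}(q) \leq \text{TC}[q : E \rightarrow B]$; chaining the two gives the result.

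The only subtlety is checking that each cited result applies with exactly the hypotheses provided. Proposition \ref{p8} requires metrisability of the four base and total spaces, which is supplied in (a); Proposition \ref{p7} imposes no further blanket assumption and Corollary \ref{c2} holds for every fibration. I therefore do not anticipate a genuine obstacle; the main care is to present the chain cleanly so the reader sees how the parametrised invariant dominates the ordinary $\text{TC}(q)$, and how the product construction is what allows the maximum on the left-hand side of (a) to appear.
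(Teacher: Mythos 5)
Your proposal is correct and follows exactly the same route as the paper: for (a) the chain Proposition \ref{p7}, then Corollary \ref{c2} applied to the product fibration $q_{1}\times q_{2}$, then Proposition \ref{p8}; for (b) the combination of Proposition \ref{p9} with Corollary \ref{c2}. No differences worth noting.
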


\begin{proof}
	\textbf{a)} Let $q_{1} \times q_{2} : E_{1} \times E_{2} \rightarrow B_{1} \times B_{2}$ be a fibration with a path-connected fiber $X_{1} \times X_{2}$. Then by Proposition \ref{p7}, Corollary \ref{c2}, and Proposition \ref{p8}, respectively, we have
	\begin{eqnarray*}
		\max\{TC(q_{1}),TC(q_{2})\} &\leq& TC(q_{1} \times q_{2}) \\ 
		&\leq& TC[q_{1} \times q_{2} : E_{1} \times E_{2} \rightarrow B_{1} \times B_{2}] \\ 
		&\leq& TC[q_{1} : E_{1} \times B_{1}] + TC[q_{2} : E_{2} \rightarrow B_{2}].
	\end{eqnarray*}
    \textbf{b)} This is obvious from Proposition \ref{p9} and Corollary \ref{c2}. 
\end{proof}

\begin{example}
	\textbf{Hopf Fibration:} Consider the Hopf fibration $q : S^{3} \rightarrow S^{2}$ with the path-connected fiber $S^{1}$. We shall show that $2 \leq$ TC$_{n}[q : S^{3} \rightarrow S^{2}] \leq n$ for $n \geq 2$. Since TC$_{n}(S^{3}) = n$ \cite{Rudyak:2010}, by Corollary \ref{c3}, TC$_{n}[q : S^{3} \rightarrow S^{2}] \leq n$. On the other hand, cat$(S^{2}) = 2$ yields that TC$_{n}[q : S^{3} \rightarrow S^{2}] \geq 2$ via (b) part of Corollary \ref{c4}.
\end{example}

\begin{example}
	\textbf{Stiefel and Grassmann Manifolds:} Consider the complex case, i.e.,
	\begin{eqnarray*}
		U(r) \longrightarrow V_{r}(\mathbb{C}^{k}) \stackrel{q_{1}}{\longrightarrow} G_{r}(\mathbb{C}^{k}).
	\end{eqnarray*}
	Since cat$(G_{r}(\mathbb{C}^{k})) = rk$ by Theorem \ref{t3}, Corollary \ref{c4} (b) states that 
	\[rk \leq \text{TC}[q_{1} : V_{r}(\mathbb{C}^{k}) \rightarrow G_{r}(\mathbb{C}^{k})].\] On the other hand, we have that TC$(V_{r}(\mathbb{C}^{k})) \leq 2r(k-r)+1$ from Theorem \ref{t4}. By Corollary \ref{c3}, we get TC$[q_{1} : V_{r}(\mathbb{C}^{k}) \rightarrow G_{r}(\mathbb{C}^{k})] \leq 2r(k-r)+1$. Finally, we conclude that
	\begin{eqnarray*}
		rk \leq \text{TC}[q_{1} : V_{r}(\mathbb{C}^{k}) \rightarrow G_{r}(\mathbb{C}^{k})] \leq 2r(k-r)+1
	\end{eqnarray*}
	for $n=2$ in the sense of TC$_{n}$.\\ 
	If we assume that the quaternionic case, namely that,
	\begin{eqnarray*}
		Sp(r) \longrightarrow V_{r}(\mathbb{H}^{k}) \stackrel{q_{2}}{\longrightarrow} G_{r}(\mathbb{H}^{k}),
	\end{eqnarray*}
	then the inequality
	\begin{eqnarray*}
		k(r-k) \leq \text{TC}[q_{2} : V_{r}(\mathbb{H}^{k}) \rightarrow G_{r}(\mathbb{H}^{k})]
	\end{eqnarray*}
	holds from the fact that $k(r-k) =$ cat$(G_{r}(\mathbb{H}^{k}))$ by Theorem \ref{t5}.\\
	Now consider the real case:
	\begin{eqnarray*}
		O(n) \longrightarrow V_{r}(\mathbb{R}^{k}) \stackrel{q_{3}}{\longrightarrow} G_{r}(\mathbb{R}^{k}).
	\end{eqnarray*}
	First, assume that $r = 2$ and $k = 2^{p}+1$ with any integer $p > 0$. Then, by Theorem 2.8 of \cite{AkhtarifarAsaidi:2020}, we get TC$[q_{3} : V_{2}(\mathbb{R}^{2^{p}+1}) \rightarrow G_{2}(\mathbb{R}^{2^{p}+1})] \geq 2^{p+1}-2$. For the upper bound, we shall use Corollary \ref{c3}. Since dim$(V_{2}(\mathbb{R}^{2^{p}+1})) = 2^{p+1}-1$, Proposition \ref{p12} says that TC$(V_{2}(\mathbb{R}^{2^{p}+1})) \leq 2^{p+2}-1$. As a consequence,
	\begin{eqnarray*}
		2^{p+1}-2 \leq \text{TC}[q_{3} : V_{2}(\mathbb{R}^{2^{p}+1}) \rightarrow G_{2}(\mathbb{R}^{2^{p}+1})] \leq 2^{p+2}-1.
	\end{eqnarray*}
	Now, assume that $r = 2$ and $k = 2^{p}+2$ with any integer $p$. Similar to the previous case, by using Theorem 2.11 in \cite{AkhtarifarAsaidi:2020}, we have the following inequalities:
	\begin{eqnarray*}
		2^{p+1}-1 \leq \text{TC}[q_{3} : V_{2}(\mathbb{R}^{2^{p}+2}) \rightarrow G_{2}(\mathbb{R}^{2^{p}+2})] \leq 2^{p+2} + 3
	\end{eqnarray*}
    with considering the fact that dim$(V_{2}(\mathbb{R}^{2^{p}+2})) = 2^{p+1}+1$.
\end{example}

\begin{theorem}
	If $q_{1} : E_{1} \rightarrow B_{1}$ and $q_{2} : E_{2} \rightarrow B_{1}$ are fiber homotopy equivalent with the same nonempty path-connected fiber $X_{1}$ for both two fibrations $q_{1}$ and $q_{2}$, then \[\text{TC}_{n}[q_{1} : E_{1} \rightarrow B_{1}] =  \text{TC}_{n}[q_{2} : E_{2} \rightarrow B_{1}],\] that is, the parametrised higher topological complexity is a fiber homotopy equivalent invariant.
\end{theorem}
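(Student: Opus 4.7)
The plan is to transport the fiber homotopy equivalence up to the $n$-fold fiber products and then invoke the homotopy invariance of higher homotopic distance encoded in Proposition \ref{p11}. By the homotopic distance form of $\text{TC}_n$, it suffices to show the equality
\[\text{D}(p_1^{(1)},\ldots,p_n^{(1)}) \;=\; \text{D}(p_1^{(2)},\ldots,p_n^{(2)}),\]
where $p_i^{(k)} : E_k \times_{B_1} \cdots \times_{B_1} E_k \to E_k$ is the $i$-th projection of the $n$-fold fiber product for $k=1,2$.

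I would begin by fixing the fiber homotopy equivalence data: maps $f : E_1 \to E_2$ and $g : E_2 \to E_1$ over $B_1$ (so that $q_2 \circ f = q_1$ and $q_1 \circ g = q_2$) together with homotopies $H_t : fg \simeq 1_{E_2}$ and $K_t : gf \simeq 1_{E_1}$ satisfying $q_2 \circ H_t = q_2$ and $q_1 \circ K_t = q_1$ for every $t \in I$. Because $f$ is a map over $B_1$, its componentwise formula $\tilde f(e_1,\ldots,e_n) = (f(e_1),\ldots,f(e_n))$ carries $E_1 \times_{B_1} \cdots \times_{B_1} E_1$ into $E_2 \times_{B_1} \cdots \times_{B_1} E_2$; similarly $g$ lifts to $\tilde g$. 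The fiberwise condition on $H_t$ is exactly what is needed so that the componentwise lift $\tilde H_t$ also lands in $E_2 \times_{B_1} \cdots \times_{B_1} E_2$, and it then furnishes the homotopy $\tilde f \tilde g \simeq 1$; symmetrically $\tilde g \tilde f \simeq 1$. Hence $f$ and $\tilde f$ are ordinary homotopy equivalences, and the strict intertwining $p_i^{(2)} \circ \tilde f = f \circ p_i^{(1)}$ is immediate from the formulas.

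With these ingredients the equality of distances becomes a brief diagram chase. Applying Proposition \ref{p11}(a) to $\tilde f \tilde g \simeq 1$ yields
\[\text{D}(p_1^{(2)},\ldots,p_n^{(2)}) \;=\; \text{D}(p_1^{(2)} \tilde f \tilde g,\ldots,p_n^{(2)} \tilde f \tilde g) \;=\; \text{D}(f \circ p_1^{(1)} \tilde g,\ldots,f \circ p_n^{(1)} \tilde g),\]
and then Proposition \ref{p11}(c), applied with the common post-composition $f$, together with Proposition \ref{p11}(d), applied with the common pre-composition $\tilde g$, bounds the right-hand side above by $\text{D}(p_1^{(1)},\ldots,p_n^{(1)})$. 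Swapping the roles of $q_1$ and $q_2$ produces the reverse inequality, and hence $\text{TC}_n[q_1 : E_1 \to B_1] = \text{TC}_n[q_2 : E_2 \to B_1]$.

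The step I expect to be the main technical point is the promotion of the fiberwise homotopies $H_t$ and $K_t$ to honest homotopies of $\tilde f \tilde g$ and $\tilde g \tilde f$ on the $n$-fold fiber products: the conditions $q_2 \circ H_t = q_2$ and $q_1 \circ K_t = q_1$ are precisely what keep the componentwise lift inside the fiber product at every intermediate time $t$, so a merely topological homotopy equivalence between $E_1$ and $E_2$ would not suffice. Once this lift is secured, everything else is a formal consequence of Proposition \ref{p11}.
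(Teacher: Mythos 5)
Your proposal is correct and follows essentially the same route as the paper: transport the fiber homotopy equivalence to the $n$-fold fiber products and conclude by the invariance properties of the higher homotopic distance. The paper's own proof merely asserts a homotopy equivalence $\beta$ between the fiber products and reads the equality off a commutative diagram, whereas you supply the point it leaves implicit --- that the homotopies must be fiberwise for the componentwise maps and homotopies to restrict to the fiber products --- and you make the two-inequality argument via Proposition \ref{p11}(a), (c), (d) explicit.
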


\begin{proof}
	Let $q_{1} : E_{1} \rightarrow B_{1}$ and $q_{2} : E_{2} \rightarrow B_{1}$ are fiber homotopy equivalent with the fiber $X_{1}$. Then we have two maps $h : E_{1} \rightarrow E_{2}$ and $k : E_{2} \rightarrow E_{1}$ satisfying two conditions $h \circ k \simeq 1_{E_{2}}$ and $k \circ h \simeq 1_{E_{1}}$.  For simplicity, we rewrite $E_{1}^{'}$ and $E_{2}^{'}$ as $E_{1} \times_{B_{1}} E_{1} \times_{B_{1}} \cdots \times_{B_{1}} E_{1}$ and $E_{2} \times_{B_{1}} E_{2} \times_{B_{1}} \cdots \times_{B_{1}} E_{2}$, respectively. Consider the homotopy equivalence map $\beta : E_{2}^{'} \rightarrow E_{1}^{'}$. Assume that $p_{i}^{B} : E_{1}^{'} \rightarrow E_{1}$ and $q_{i}^{B} : E_{2}^{'} \rightarrow E_{2}$ are projections for each $i \in \{1,\cdots,n\}$. Then the following commutative diagram
		$$\xymatrix{
		E_{1}^{'} \ar[r]^{p_{i}^{B_{1}}} &
		E_{1} \ar[d]^{h} \\
		E_{2}^{'} \ar[u]^{\beta} \ar[r]_{q_{i}^{B_{1}}} & E_{2}}$$
	states that D$(p_{1}^{B_{1}},\cdots,p_{n}^{B_{1}}) =$ D$(q_{1}^{B_{1}},\cdots,q_{n}^{B_{1}})$. As a consequence, the equality TC$_{n}[q_{1} : E_{1} \rightarrow B_{1}] =$ TC$_{n}[q_{2} : E_{2} \rightarrow B_{1}]$ holds.
\end{proof}

\section{Conclusion}
\label{sec:5}
\quad The relative topological complexity is one of the first examples of different types of TC. With this approach, a lower bound for TC is obtained. A similar approach to TC$_{n}$ is done by the relative higher topological complexity on the notion homotopic distance in this study. Interestingly, it also gives a new way to introduce the parametrised topological complexity, which has been one of the most popular numbers among several TC versions in the last few years due to the different constructions of its motion planning algorithm. The higher homotopic distance again leads to define the parametrised higher topological complexity. These higher settings of TC always provide a different and strong perspective on the subject of robot motion planning problems in daily life. 

\acknowledgment{The first author is granted as a fellowship by the Scientific and Technological Research Council of Turkey TUBITAK-2211-A. In addition, this work was partially supported by the Research Fund of Ege University (Project Number: FDK-2020-21123).}

\end{document}